\newcommand{\bburl}[1]{\textcolor{blue}{\url{#1}}}
\numberwithin{equation}{section}
\newtheorem{thm}{Theorem}[section]
\newtheorem{cor}[thm]{Corollary}
\newtheorem{lem}[thm]{Lemma}
\newtheorem{prop}[thm]{Proposition}
\newtheorem{defn}[thm]{Definition}
\newtheorem{cla}[thm]{Claim}
\theoremstyle{plain}
\newtheorem{corollary}[thm]{Corollary}
\newtheorem{example}[thm]{Example}
\newtheorem{lemma}[thm]{Lemma}
\newtheorem{proposition}[thm]{Proposition}
\newtheorem{theorem}[thm]{Theorem}
\newtheorem*{theorem*}{Theorem}
\newtheorem{conjecture}[thm]{Conjecture}
\newcommand\be{\begin{equation}}
\newcommand\ee{\end{equation}}
\newcommand\bea{\begin{eqnarray}}
\newcommand\eea{\end{eqnarray}}
\newcommand\bi{\begin{itemize}}
\newcommand\ei{\end{itemize}}
\newcommand\ben{\begin{enumerate}}
\newcommand\een{\end{enumerate}}
\newcommand\bc{\begin{center}}
\newcommand\ec{\end{center}}
\newcommand\ba{\begin{array}}
\newcommand\ea{\end{array}}
\newcommand{\tbf}[1]{\textbf{#1}}
\newcommand{\R}{\ensuremath{\mathbb{R}}}
\newcommand{\Z}{\ensuremath{\mathbb{Z}}}
\newcommand\frakfamily{\usefont{U}{yfrak}{m}{n}}
\DeclareTextFontCommand{\textfrak}{\frakfamily}
\newcommand{\hr}[1]{\href{#1}{\url{#1}}}
\title{The Emergence of 4-Cycles in Polynomial Maps over the Extended Integers}
\author{Andrew Best}
\email{\textcolor{blue}{\href{mailto:best.221@osu.edu)}{best.221@osu.edu}}}
\address{Department of Mathematics and Statistics, Williams College, Williamstown, MA 01267}
\curraddr{Department of Mathematics, The Ohio State University, Columbus, OH 43210}
\author{Patrick Dynes}
\email{\textcolor{blue}{\href{mailto:pdynes@clemson.edu}{pdynes@clemson.edu}}}
\address{Department of Mathematical Sciences, Clemson University, Clemson, SC 29634}
\author{Steven J. Miller}
\email{\textcolor{blue}{\href{mailto:sjm1@williams.edu}{sjm1@williams.edu}}}
\address{Department of Mathematics and Statistics, Williams College, Williamstown, MA 01267}
\author{Jasmine Powell}
\email{\textcolor{blue}{\href{mailto:jasminepowell2015@u.northwestern.edu}{jasminepowell2015@u.northwestern.edu}}}
\address{Department of Mathematics, Northwestern University, Evanston, IL 60208}
\author{Benjamin Weiss}
\email{\textcolor{blue}{\href{mailto:benjamin.weiss@maine.edu}{benjamin.weiss@maine.edu}}}
\address{Department of Mathematics and Statistics, The University of Maine, Orono, ME 04469}
\subjclass[2010]{11D99 (primary), 11A41 (secondary), 11B37 (secondary).} 
\keywords{Polynomial maps, arithmetic dynamics, cycles, extended integers}
\thanks{The first, second and fourth named authors were supported by NSF Grant DMS1347804 and Williams College, and the third named author was partially supported by NSF Grant DMS1265673. We thank Conor Hetland for assistance with some of the coding, and Karl Winsor for helpful conversations. We also thank Michael Zieve for his comments on an earlier version, and bringing the paper by Narkiewicz to our attention. }
\date{\today}
\begin{document}

\maketitle








\begin{abstract}

Let $f(x) \in \Z[x]$; for each integer $\alpha$ it is interesting to consider the number of iterates $n_{\alpha}$, if possible, needed to satisfy $f^{n_{\alpha}}(\alpha) =  \alpha$. The sets $\{\alpha, f(\alpha), \ldots, f^{n_{\alpha} - 1}(\alpha), \alpha\}$ generated by the iterates of $f$ are called cycles. For $\Z[x]$ it is known that cycles of length 1 and 2 occur, and no others. While much is known for extensions to number fields, we concentrate on extending $\Z$ by adjoining reciprocals of primes. Let $\Z[1/p_1, \ldots, 1/p_n]$ denote $\Z$ extended by adding in the reciprocals of the $n$ primes $p_1, \ldots, p_n$ and all their products and powers with each other and the elements of $\Z$.


Interestingly, cycles of length 4, called 4-cycles, emerge for polynomials in $\Z\left[1/p_1, \ldots, 1/p_n\right][x]$ under the appropriate conditions. The problem of finding criteria under which 4-cycles emerge is equivalent to determining how often a sum of four terms is zero, where the terms are $\pm 1$ times a product of elements from the list of $n$ primes. We investigate conditions on sets of primes under which 4-cycles emerge. We characterize when 4-cycles emerge if the set has one or two primes, and (assuming a generalization of the ABC conjecture) find conditions on sets of primes guaranteed not to cause 4-cycles to emerge.

\end{abstract}

\maketitle

\tableofcontents








\section{Introduction}

\subsection{Background and Motivation}


Let $R$ be a ring and $f(x) \in R[x]$ a polynomial over $R$. For any fixed $\alpha \in R$ we define the cycle starting at $\alpha$ to be the sequence $(\alpha, f(\alpha)$, $f(f(\alpha)) := f^2(\alpha)$, $\dots)$. When this cycle consists of only finitely many distinct elements of $R$, then $\alpha$ is said to be pre-periodic. We study the cases where $\alpha$ is periodic; that is, where $f^{n}(\alpha) = \alpha$ for some integer $n$.

\begin{defn}

Given a ring $R$ and a polynomial $f$ in $R[x]$, an \textbf{$n$-cycle} (or a \textbf{cycle of length $n$}) is a sequence of $n$ distinct elements of the ring, $(x_1, \dots, x_n)$, such that \begin{equation} f(x_1) \ =  \ x_2,\ \ \  f(x_2) \ = \ x_3,\ \ \  \dots, \ \ \ f(x_n) \ = \ x_1.\end{equation} \end{defn}


It is well known that when $R = \Z$ the only possible cycle lengths are 1 and 2, both of which occur. For one proof, see \cite[Lemma 28]{Zieve}. In more generality, the possible cycle lengths for a polynomial in a number field has been related to the unit group of the ring of integers, see \cite{Lenstra}. In his thesis Zieve \cite{Zieve} showed that if $R = \Z_{(2)}$, the localization\footnote{That is, $\Z_{(2)}$ consists of all fractions where the numerator is an integer and the denominator is not divisible by 2.} of $\Z$ at the ideal $(2)$, then the only possible cycle lengths are 1, 2, and 4. It is thus natural to consider rings properly contained between $\Z$ and $\Z_{(2)}$. In particular, we are interested in the rings $\Z \left[1/p_1, \dots, 1/p_n \right]$ which are formed by adjoining the reciprocals of $n$ odd primes $\{p_1, \ldots, p_n\}$ along with all their products and powers with each other and the elements of $\Z$. We call $\{p_1,\ldots, p_n\}$ the \emph{\textbf{inversion set}} associated to $\Z \left[1/p_1, \ldots, 1/p_n \right]$. Because $\Z \subset \Z \left[1/p_1, \ldots, 1/p_n \right]$, these intermediary rings of course have cycles of length 1 and 2.

While it is not known which rings $\Z \left[1/p_1, \dots, 1/p_n \right]$ have polynomials that exhibit 4-cycles, there is an elegant connection between the existence of 4-cycles in a ring $\Z \left[1/p_1\right.$, $\dots$, $\left.1/p_n \right]$ and the solvability of special equations involving products of the primes in its inversion set.

\begin{lem} If there is a polynomial in $\Z \left[1/p_1, \ldots, 1/p_n \right]$ that exhibits a 4-cycle, then we can write

\be u_1 \ + \ u_2 \ + \ u_3 \ + \ u_4\ = \ 0, \ee

where $u_i = \pm p_1^{a_{i1}} \cdots p_n^{a_{in}}$ and each $a_{ij}$ is a nonnegative integer. In order to discard pathological examples like $1 - 1 + 1 - 1 = 0$ or $p - p + 1 - 1 = 0$, we also insist that the $u_i$'s have no proper subsum equal to 0.

\end{lem}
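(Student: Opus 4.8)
The plan is to start from a polynomial $f(x)\in \Z[1/p_1,\dots,1/p_n][x]$ admitting a $4$-cycle $(x_1,x_2,x_3,x_4)$ and extract an equation of the stated shape by exploiting the divided-difference structure that governs cycles of a polynomial. The basic mechanism is the standard observation that for any $a,b$ in a commutative ring, $(a-b)\mid (f(a)-f(b))$; applying this around the cycle gives $(x_i-x_j)\mid (x_{i+1}-x_{j+1})$ for all $i,j$ (indices mod $4$). Iterating all the way around the cycle forces $(x_i-x_j)$ and $(x_{i+1}-x_{j+1})$ to be associates, i.e. to differ by a unit. In the ring $R=\Z[1/p_1,\dots,1/p_n]$ the unit group is exactly $\{\pm p_1^{b_1}\cdots p_n^{b_n}: b_j\in\Z\}$, so each ratio of consecutive differences is $\pm$ a product of (integer, possibly negative) powers of the $p_i$.

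First I would set up notation for the three ``diagonal'' differences that matter in a $4$-cycle. Because opposite vertices behave differently from adjacent ones, the key quantities are $x_1-x_3$ and $x_2-x_4$ together with the adjacent differences $x_1-x_2$, $x_2-x_3$, $x_3-x_4$, $x_4-x_1$. Applying the divisibility/associate relation, I would show that $x_1-x_3$ and $x_2-x_4$ are associates (they map to each other under $f$ up to going around twice), and that the four adjacent differences are pairwise associates as well. Writing one adjacent difference as $d$ and the others as $u_i\, d$ with $u_i$ units, the telescoping identity
\begin{equation}
(x_1-x_2)+(x_2-x_3)+(x_3-x_4)+(x_4-x_1) \ = \ 0
\end{equation}
becomes $d\,(\pm 1 \pm u_2 \pm u_3 \pm u_4)=0$, and dividing by $d$ (nonzero, since the $x_i$ are distinct) yields a vanishing sum of four units of $R$. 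Clearing denominators by multiplying through by a suitable monomial $p_1^{c_1}\cdots p_n^{c_n}$ turns this into $u_1+u_2+u_3+u_4=0$ with each $u_i=\pm p_1^{a_{i1}}\cdots p_n^{a_{in}}$ and all $a_{ij}\ge 0$, as required.

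The main obstacle is the no-proper-subsum condition. A priori the $u_i$ produced above could have a vanishing subsum; I would rule this out by going back to the geometry of the cycle. If, say, $u_1+u_2=0$ then two consecutive adjacent differences cancel, forcing $x_1-x_3 = (x_1-x_2)+(x_2-x_3)=0$, contradicting distinctness of $x_1$ and $x_3$; the other pairings are handled the same way using the remaining diagonal. One must also check the case where the cancellation is $u_1+u_3=0$ (non-consecutive terms), which similarly collapses a diagonal difference. Thus every proper subsum being nonzero is exactly the statement that no two of the $x_i$ coincide, which we are given. Finally I would remark that clearing denominators does not create a spurious subsum relation, since multiplying all four $u_i$ by the same nonzero monomial preserves which subsums vanish. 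This completes the reduction; the converse direction (that such an equation yields a $4$-cycle) is not claimed here and would be treated separately.
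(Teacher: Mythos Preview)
Your core approach matches the paper's: both use the divisibility $(a-b)\mid(f(a)-f(b))$ to obtain the chain $x_2-x_1 \mid x_3-x_2 \mid x_4-x_3 \mid x_1-x_4 \mid x_2-x_1$, conclude that the four adjacent differences are pairwise associates in $R$, and then read off a vanishing sum of four units from the telescoping identity $(x_2-x_1)+(x_3-x_2)+(x_4-x_3)+(x_1-x_4)=0$, followed by clearing denominators. Your side discussion of the diagonals $x_1-x_3$ and $x_2-x_4$ is not used in the paper's argument but is harmless.

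There is, however, a genuine gap in your treatment of the no-proper-subsum condition. Writing $d_i=x_i-x_{i+1}$ (indices mod $4$), you correctly note that a vanishing \emph{consecutive} pair such as $d_1+d_2=0$ forces $x_1=x_3$, and likewise a vanishing $3$-term subsum forces some $d_i=0$. But your claim that the non-consecutive case $d_1+d_3=0$ ``similarly collapses a diagonal difference'' is incorrect: one computes $d_1+d_3=(x_1-x_2)+(x_3-x_4)=(x_1+x_3)-(x_2+x_4)$, which is not a diagonal, and its vanishing only says $x_1+x_3=x_2+x_4$. That equation is perfectly compatible with four distinct $x_i$, so distinctness alone does not close this case. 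To rule it out one needs further input from the cycle structure (for instance the additional associate relation between the diagonals $x_1-x_3$ and $x_2-x_4$, or more directly the constraints from Zieve's Corollary 27 on the units $u,v$). The paper's own sketch does not spell out the subsum condition either, explicitly deferring the full statement to Zieve's thesis; your instinct to address it is good, but the argument as written does not dispose of the $\{1,3\}$ pairing.
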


This fact is a consequence of Corollary 20 in \cite{Zieve}, and we now provide a paraphrasing of an explanation from his thesis.




To show the necessity of the existence of such a linear relation


suppose that $f \in R[x]$ has the 4-cycle $(x_1, x_2, x_3, x_4)$. As the polynomial $x-y$ divides $f(x) - f(y)$ in $R[x,y]$, we find

\begin{align}
x_{i} \ - \ x_{i-1} \ | \ f(x_{i}) \ - \ f(x_{i-1}) \ \ = \ \ x_{i+1} - x_{i}, \hspace{15 mm} 1 \leq i \leq 4.
\end{align}



\noindent From this, we obtain the chain of divisors

\begin{align}
x_2 \ - \ x_1 \ | \ x_3 \ - \ x_2 \ | \ x_4 \ - \ x_3 \ | \ x_1 \ - \ x_4 \ | \ x_2 \ - \ x_1.
\end{align}

This shows that the pairwise ratios of $u_1 = x_2  -  x_1, \ \ u_2 = x_3  -   x_2, \ \ u_3 = x_4 -   x_3,$ and $u_4 = x_1  -  x_4$ are units in $R$. Therefore, if an orbit exists, we are guaranteed a sum of units equaling zero.

Note that this condition is not sufficient for there to be a 4-cycle; see Lemma \ref{lem:cor27}.








\begin{defn} \label{defn:AdmitCycle}

We say that a set of primes $\{p_1, \dots, p_n\}$ \tbf{admits a 4-cycle} if we can write $\epsilon_1 u_1 \ + \ \epsilon_2 u_2 \ + \ \epsilon_3 u_3 \ + \ \epsilon_4 u_4 = 0$ with $\epsilon_i \in \{-1, 1\}$ and $u_i = p_1^{a_{i1}} \cdots p_n^{a_{in}}$ where each $a_{ij}$ is a nonnegative integer; we further require that the $u_i$'s have no zero proper subsum. If a set of primes does not admit a 4-cycle, we say it \tbf{avoids a 4-cycle}. Moreover, we say that this set \tbf{linearly admits (or avoids) a 4-cycle} if each $a_{ij} \in \{0, 1\}$, and in general, we say that this set \tbf{admits (or avoids) a 4-cycle with $n$-powers} if each $a_{ij} \in \{0,1,\ldots,n\}$.

\end{defn}



We have justified in a natural way the requirements of Definition \ref{defn:AdmitCycle} in this section. With only a minor abuse of notation, we apply the same terminology for sets of primes to the ring $R$.









\subsection{Summary of Main Results}

We first attempt to classify inversion sets of low cardinality by whether they admit 4-cycles. Theorems \ref{prop:singleton} and \ref{thm:doubletonclass} also appear in Narkiewicz \cite{Nar}[Theorem 1 \& Theorem 2] with similar proofs.

\begin{theorem}\label{prop:singleton}

$\Z[1/p]$ admits a 4-cycle if and only if $p = 2$ or $3$.

\end{theorem}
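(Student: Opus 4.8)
The statement is an ``if and only if'', so I would split it into the two directions. For the ``if'' direction, I need to exhibit, for $p=2$ and for $p=3$, a concrete relation $\epsilon_1 u_1 + \epsilon_2 u_2 + \epsilon_3 u_3 + \epsilon_4 u_4 = 0$ with each $u_i$ a nonnegative power of $p$, each $\epsilon_i \in \{-1,1\}$, and no zero proper subsum. For $p=2$ the natural candidate is $1 + 1 + 2 - 4 = 0$: here $u_1=u_2=1$, $u_3=2$, $u_4=4$, the signs are $(+,+,+,-)$, and one checks the six pairs and four triples of the $u_i$ (with their signs) never sum to zero, so there is no proper zero subsum. For $p=3$ the candidate is $1 + 1 + 1 - 3 = 0$, with the same sort of routine subsum check. (These are exactly the relations that produce the known $4$-cycles in $\Z_{(2)}$ and its analogue at $3$, so it is reassuring that they are available here.)

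For the ``only if'' direction I must show that no odd prime $p \geq 5$ admits a $4$-cycle. Suppose $\epsilon_1 u_1 + \epsilon_2 u_2 + \epsilon_3 u_3 + \epsilon_4 u_4 = 0$ with $u_i = p^{a_i}$, $a_i \geq 0$, and no zero proper subsum. Since there is no zero proper subsum, in particular no two of the terms cancel, so among the four signed terms we cannot have both a $+u$ and a $-u$ with equal $u$; hence the multiset of signs cannot be two-and-two in a way that pairs equal powers, and a short case analysis shows the sign pattern must be (up to global sign and reordering) $(+,+,+,-)$. Thus after relabeling we have $p^{a_1} + p^{a_2} + p^{a_3} = p^{a_4}$ with the left side having no proper zero subsum (automatic here since all terms are positive) and, crucially, we may divide through by $p^{\min(a_1,a_2,a_3,a_4)}$; minimality considerations force this common factor to be a power appearing on the left, so WLOG $\min = 0$. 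Reducing the equation $p^{a_1} + p^{a_2} + p^{a_3} = p^{a_4}$ modulo $p$: if $a_4 \geq 1$ then the left side is $\equiv p^{a_1}+p^{a_2}+p^{a_3} \pmod p$, and for this to be $\equiv 0$ we need the number of $a_i$ ($i\le 3$) equal to $0$ to be a multiple of $p$; that count is $1$, $2$, or $3$, none of which is a multiple of $p$ when $p \geq 5$ — contradiction. The remaining possibility is $a_4 = 0$, forcing $a_1=a_2=a_3=0$ as well (each term is a positive integer summing to $1$ with three summands, impossible), or more carefully $p^{a_1}+p^{a_2}+p^{a_3}=1$ which is impossible for positive terms. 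Hence no such relation exists, so $\Z[1/p]$ avoids a $4$-cycle for $p \geq 5$.

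I should double-check one subtlety in the ``only if'' argument: the reduction to sign pattern $(+,+,+,-)$. The four possibilities for the sign multiset are $(+,+,+,+)$, $(+,+,+,-)$, $(+,+,-,-)$ (and their negatives). The all-equal-sign case is impossible since the $u_i$ are positive. In the $(+,+,-,-)$ case we get $p^{a_1}+p^{a_2} = p^{a_3}+p^{a_4}$; I would argue that after dividing by the minimal power and reducing mod $p$, the valuations must match up in a way that produces a proper zero subsum (e.g. if $a_1 = \min$ appears on the left, then mod $p$ the left is $1 + (\text{something})$ and the right is a sum of powers, and matching the units digit forces $a_3$ or $a_4$ to be $0$ too; continuing, one extracts a two-term zero subsum $p^{a_1} - p^{a_k} = 0$), contradicting the no-subsum hypothesis. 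So only $(+,+,+,-)$ survives, and the mod-$p$ argument above finishes it.

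\textbf{Main obstacle.} The genuinely delicate step is not the existence of the examples for $p=2,3$ (those are explicit) but the case analysis on sign patterns in the ``only if'' direction, and in particular ruling out the $(+,+,-,-)$ pattern cleanly — making sure the ``no proper zero subsum'' hypothesis is deployed correctly to kill it, rather than accidentally assuming what one wants to prove. Once the sign pattern is pinned to $(+,+,+,-)$, the single reduction modulo $p$ does all the work and is where the threshold $p \geq 5$ (equivalently $p \notin \{2,3\}$) enters, since the number of unit-valuation terms on the left is at most $3 < p$.
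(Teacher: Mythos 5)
Your proof is correct and follows essentially the same route as the paper: for $p=2,3$ you exhibit the same relations ($1+1+2-4=0$ and $1+1+1-3=0$), and for $p>3$ you reduce modulo $p$ after dividing through by the minimal power, observing that the valuation-zero contributions must sum to a nonzero residue in $\{1,2,3\}$ once proper zero subsums are excluded. The paper organizes this slightly more economically --- it normalizes first (divide by $p^{\min a_i}$, fix a leading $+1$) and handles all sign patterns at once in the mod-$p$ count --- whereas you split on the sign pattern and treat $(+,+,+,-)$ and $(+,+,-,-)$ separately; the extra case work you do for $(+,+,-,-)$ is correct but is already absorbed into the paper's single mod-$p$ argument.
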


\begin{theorem}\label{thm:doubletonclass} Fix a positive integer $n$. An inversion set with two elements admits a 4-cycle if any of the following hold:

\begin{enumerate}

\item it is of the form $\{p,p+2\}$, with $p$ and $p+2$ both prime,

\item it is of the form $\{p,p^n-2\}$, with $p$ and $p^n-2$ both prime,

\item it is of the form $\{p,2p+1\}$, with $p$ and $2p+1$ both prime.

\end{enumerate}

\end{theorem}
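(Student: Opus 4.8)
The plan is to verify each of the three families directly by exhibiting an explicit vanishing sum of four signed prime-products meeting the requirements of Definition \ref{defn:AdmitCycle}. In each case, the candidate relation will use only small powers (in fact, the exponents will be bounded by $n$ in case (2) and by $1$ in cases (1) and (3)), so the entire argument reduces to: (i) writing down the sum, (ii) checking it equals zero algebraically, and (iii) checking that no proper subsum vanishes.

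\begin{proof}[Proof sketch]
For case (1), with $p$ and $p+2$ both prime, the identity $p \cdot 1 - (p+2)\cdot 1 + 1 + 1 = 0$, i.e. $p - (p+2) + 1 + 1 = 0$, does the job: it is a sum of four terms, each of the form $\pm p_1^{a_{11}} p_2^{a_{12}}$ with exponents in $\{0,1\}$, so the set \emph{linearly admits} a 4-cycle. One then checks no proper subsum is zero: the two-term subsums are $p-(p+2) = -2$, $p+1$, $p+1$, $-(p+2)+1 = -p-1$, $-(p+2)+1 = -p-1$, and $1+1 = 2$, none of which vanishes for $p$ an odd prime; and no single term is zero. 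For case (3), with $p$ and $2p+1$ both prime, use $-(2p+1) + p + p + 1 = 0$; again each term is $\pm p_1^{a} p_2^{b}$ with $a,b \in \{0,1\}$, the subsums are $-(2p+1)+p = -p-1$, $-(2p+1)+1 = -2p$, $p+p = 2p$, $p+1$, each nonzero, so this set linearly admits a 4-cycle. For case (2), with $p$ and $q := p^n - 2$ both prime, use $p^n - q - 1 - 1 = 0$, where the term $p^n$ has exponent $n$ in $p$ and $0$ in $q$, the term $-q$ has exponents $0$ and $1$, and the last two terms are $-1$; so the exponents lie in $\{0,1,\dots,n\}$ and the set admits a 4-cycle with $n$-powers. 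The proper subsums are $p^n - q = 2$, $p^n - 1$, $p^n - 1$, $-q - 1 = -(p^n-1)$, $-q-1$, $-2$, none of which is zero since $p \ge 3$ forces $p^n \ge 3$. This completes all three cases.
\end{proof}

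The bookkeeping is entirely routine; the only genuine point of care is confirming the no-proper-subsum condition, and in every case the obstruction to a vanishing two-term subsum is simply that $p$ (hence $p^n$) exceeds $1$, together with the primality assumptions guaranteeing $p+2$, $p^n-2$, $2p+1$ are the actual second prime of the inversion set rather than a power of $p$. There is really no main obstacle here: the content of the theorem is the recognition that these three algebraic identities are available, and the proof is just the observation that the identities satisfy Definition \ref{defn:AdmitCycle}. (One could additionally remark, though it is not needed for the statement, that the converse direction — that these are among the \emph{only} doubleton sets admitting linear 4-cycles — would require the full classification argument and is not claimed here; compare Theorem \ref{prop:singleton} and the ABC-conditional results mentioned in the introduction.)
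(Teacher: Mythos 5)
Your proof is correct and takes essentially the same approach as the paper: in each of the three cases the paper exhibits the negated version of your four-term vanishing sum (e.g.\ $(p+2) - 1 - 1 - p = 0$ for case (1), $(p^n-2)+1+1-p^n = 0$ for case (2), $(2p+1)-p-p-1 = 0$ for case (3)) and observes there is no zero proper subsum. You add the explicit enumeration of two-term subsums, which the paper states without detail; this is a modest gain in rigor but the argument is the same.
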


We prove related results for infinite sets, such as Corollary \ref{cor:upperdensity4cycle} (which states that any inversion set with positive upper density not only admits a 4-cycle, but does so linearly).

We then turn to the much harder problem of constructing inversion sets that are proven to avoid 4-cycles. Our main result assumes a generalized ABC conjecture. If we do not assume this conjecture we can prove that certain sets avoid 4-cycles with $n$ powers (i.e., no prime occurs to a power greater than $n$); see \S\ref{subsec:avoiding4cycles} for detailed constructions.

\begin{theorem}\label{t:main}

If Conjecture \ref{c:BrowBrz} is true, then there exist infinitely many pairs of distinct primes $p_1$ and $p_2$ such that $\Z\left[\frac{1}{p_1}, \frac{1}{p_2}\right]$ does not have a 4-cycle.

\end{theorem}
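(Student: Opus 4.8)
The plan is to reduce the statement to a counting/sieving argument about pairs of primes $(p_1,p_2)$ for which no relation $\epsilon_1 u_1 + \epsilon_2 u_2 + \epsilon_3 u_3 + \epsilon_4 u_4 = 0$ with $u_i = p_1^{a_{i1}} p_2^{a_{i2}}$ can hold subject to the ``no zero proper subsum'' constraint. By the lemma following Definition~\ref{defn:AdmitCycle}, it suffices to exhibit infinitely many such pairs; and since a zero subsum of two terms forces $p_1 = p_2$ (excluded) or a trivial $\pm 1$ cancellation (which we forbid structurally), the real content is ruling out genuine four-term vanishing sums. After dividing through by the common factor $p_1^{\min_i a_{i1}} p_2^{\min_i a_{i2}}$, we may assume each prime is absent from at least one $u_i$; the relation then becomes a constrained $S$-unit equation $x + y + z + w = 0$ in the two-generated multiplicative group $\langle p_1, p_2\rangle$.

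First I would organize the possible ``shapes'' of a primitive four-term relation by which of the exponent vectors $(a_{i1}, a_{i2})$ can be nonzero, reducing (up to relabeling and sign) to a bounded list of equation types such as $1 \pm p_1^a \pm p_2^b \pm p_1^c p_2^d = 0$ and $\pm p_1^a \pm p_2^b \pm p_1^c p_2^d \pm p_1^e p_2^f = 0$. For each shape I would apply Conjecture~\ref{c:BrowBrz} (the generalized ABC/Browkin--Brzezi\'nski-type statement): reading the relation as $A + B = C$ with $A,B,C$ built from $p_1,p_2$, the $abc$-type bound forces $\max(|A|,|B|,|C|) \ll_\varepsilon \mathrm{rad}(ABC)^{1+\varepsilon} \ll_\varepsilon (p_1 p_2)^{1+\varepsilon}$, which in turn bounds all the exponents $a_{ij}$ in terms of $p_1$ and $p_2$ — indeed it caps each exponent by a constant independent of the primes once the primes are large enough, because a genuine relation with a large exponent would need the radical to absorb a power, contradicting the bound. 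This collapses the problem: for $p_1, p_2$ large there are only finitely many candidate relations, each of the form $\pm p_1^a p_2^b \pm \cdots = 0$ with all exponents in a fixed finite set $\{0,1,\dots,M\}$.

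Next I would turn the finitely many surviving equations into congruence or size obstructions that eliminate a positive-density set of pairs $(p_1,p_2)$. For a fixed equation type, say $1 + \epsilon_2 p_1^a + \epsilon_3 p_2^b + \epsilon_4 p_1^c p_2^d = 0$, I would either (i) reduce modulo a small fixed modulus $q$ and show that for $p_1, p_2$ in suitable residue classes mod $q$ the left side is a nonzero residue, or (ii) use crude inequalities (e.g. if $p_1$ is much larger than $p_2^M$ then the term of highest $p_1$-degree dominates and the sum cannot vanish). Running over the finite list of equation types, each excludes $p_1$ (resp.\ $p_2$) from lying in certain residue classes or forces a size separation; intersecting these constraints still leaves, by Dirichlet's theorem on primes in arithmetic progressions, infinitely many pairs $p_1 < p_2$ with $p_1$ in a prescribed residue class and $p_2$ enormous relative to $p_1^M$. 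Such pairs admit no 4-cycle, giving the theorem.

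The main obstacle I anticipate is the bookkeeping in the reduction step: correctly enumerating all primitive shapes of the four-term relation (carefully handling the ``no zero proper subsum'' hypothesis, which rules out some shapes outright and is what prevents the degenerate $1 - 1 + p - p$ type solutions), and then verifying that Conjecture~\ref{c:BrowBrz} really does bound \emph{every} exponent uniformly — the delicate case being relations where two of the four terms share a prime so that the ``radical'' side does not obviously grow, and one must apply the conjecture to a cleverly chosen sub-relation $A + B = C$ rather than the whole sum. Ensuring the excluded residue classes (or size regimes) are simultaneously satisfiable by infinitely many prime pairs is then routine via Dirichlet, but the precise form of Conjecture~\ref{c:BrowBrz} will dictate exactly how large $p_2$ must be chosen relative to $p_1$.
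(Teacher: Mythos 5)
Your high-level strategy (use the generalized ABC bound to cap the terms, then exploit size separation) overlaps with the paper's, but the specific reduction you propose has a genuine gap that the paper avoids by a different and more careful combination of ingredients.

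The key problem is your claim that Conjecture~\ref{c:BrowBrz} ``caps each exponent by a constant independent of the primes once the primes are large enough.'' This is false in general. The conjecture gives $\max_i |a_i| \le C_{4,\epsilon}(p_1p_2)^{3+\epsilon}$, so if $a_i = \pm p_1^{a}p_2^{b}$, the bound on the exponent $a$ is roughly $(3+\epsilon)\log(p_1p_2)/\log p_1$, which blows up if $p_2$ is much larger than any fixed power of $p_1$. So there is no uniform $M$ unless you already control the ratio $\log p_2/\log p_1$. Worse, your proposed elimination step then takes $p_2$ ``enormous relative to $p_1^M$,'' which is precisely the regime in which the uniform exponent bound fails; the two halves of your argument are incompatible. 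The congruence-elimination alternative (i) also cannot work as stated: for any fixed modulus $q$, choosing signs so that the four terms pair off as $1,1,-1,-1$ makes the sum $\equiv 0 \pmod q$ regardless of residue classes, so a mod-$q$ obstruction alone cannot rule out all shapes. And the ``highest $p_1$-degree dominates'' step (ii) is not automatic when two summands share the same $p_1$-exponent.

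The paper sidesteps all of this by choosing $p_1$ and $p_2$ of \emph{comparable size} (specifically $p_2 \in (3p_1, \tfrac13 p_1^{1+1/m})$ with $p_1 \ge 18^m$, using Bertrand's postulate to guarantee such $p_2$ exists). With the primes pinned in that narrow window, the ABC bound implies that the exponent on $p_2$ in any term of a vanishing 4-sum is $< m$; the exponent on $p_1$ is \emph{not} bounded, and is not case-analyzed at all. Instead, the paper invokes Lemma~\ref{lem:primeordering} to produce a total ordering of the set $\{p_1^k p_2^\ell : k\ge 0,\ 0\le \ell<m\}$ in which consecutive terms differ by more than a factor of $3$, and then applies the Separation Lemma~\ref{sep_lemma} to conclude that no signed 4-term sum from this set can vanish. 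So rather than enumerating equation shapes and killing them one at a time, the paper engineers a single uniform ``$3$-separation'' that rules out all of them at once. To repair your proposal you would essentially have to rediscover this: you need the two primes multiplicatively close so that both the ABC bound and a separation argument apply simultaneously, which is the opposite of the $p_2 \gg p_1^M$ regime you reached for.
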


After proving some useful auxiliary results, we prove the above theorems in \S\ref{sec:main}, and give conditions on the two primes in Theorem \ref{t:main} that, under Conjecture \ref{c:BrowBrz} holding, ensure there is no 4-cycle. We conclude with a discussion of some future research problems in \S\ref{sec:futurework} and some examples in the appendices.








\section{Proofs of Main Results}\label{sec:main}

We begin with a result of Zieve that will be useful throughout the paper.

\begin{lemma}\label{lem:cor27}(Corollary 27, \cite{Zieve}) Let $R$ be an integral domain. There exists a polynomial in $R[x]$ having a 4-cycle in $R$ if and only if there exist units $u$ and $v$ for which $u+v$ and $u+1$ are associates, and for which $1+u+v$ is a unit.

\end{lemma}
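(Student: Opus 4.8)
\textbf{Proof proposal for Lemma \ref{lem:cor27}.}

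The plan is to reduce the existence of a 4-cycle to a normalized form via affine changes of variable, and then read off the stated unit conditions. First I would observe that the group of affine substitutions $x \mapsto ax+b$ with $a$ a unit of $R$ and $b \in R$ acts on polynomials by conjugation: if $g(x) = a^{-1}(f(ax+b) - b)$, then $f$ has an $n$-cycle if and only if $g$ does (the cycle for $g$ is the image of the cycle for $f$ under $x \mapsto a^{-1}(x-b)$). So if $f$ has the 4-cycle $(x_1,x_2,x_3,x_4)$, I may conjugate by the map sending $x_1 \mapsto 0$ and $x_2 \mapsto 1$ — that is, choose $b = x_1$ and $a = x_2 - x_1$, which is a unit by the divisibility chain recalled in the excerpt (the pairwise ratios of the consecutive differences $u_i = x_{i+1} - x_i$ are all units). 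After this normalization the cycle becomes $(0, 1, s, t)$ for some $s, t \in R$, with $f(0) = 1$, $f(1) = s$, $f(s) = t$, $f(t) = 0$.

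Next I would extract the relations among $s$ and $t$ forced by the divisibility chain applied to this normalized cycle. From $x_{i+1} - x_i \mid x_{i+2} - x_{i+1}$ around the 4-cycle $(0,1,s,t)$, the consecutive differences $1,\ s-1,\ t-s,\ -t$ are pairwise associates; dividing by the first (which is $1$), this says $s - 1$, $t - s$, and $-t$ are all units. Write $u := s - 1$ (a unit) and $v := (t-s)$, and track how $t$ sits relative to $s$: from $t = s + (t-s)$ and the associate relations one gets, after renaming, that $t$ is (a unit times) $1 + u + v$ while $s = 1 + u$; the condition that $-t$ be a unit then says $1 + u + v$ is a unit, and the condition that $t - s$ be an associate of $s - 1 = u$ says $u + v$ and $u$ are associates, equivalently $u + v$ and $u + 1$ differ from $s$ and $t$ consistently — I would be careful here to pin down exactly which associate relation yields ``$u+v$ and $u+1$ are associates.'' Concretely: $u+1 = s$ and the next difference $t - s$ is an associate of $u$, while $-t$ is a unit; expressing $t$ via these gives $t = -(\text{unit})$ and simultaneously $t = s + (\text{unit}\cdot u)$, and matching these is precisely the statement that $u+v$ (a suitable name for the relevant combination) is an associate of $u+1$. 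This is the bookkeeping step and the one place where care is needed to name $u$ and $v$ so that the final three conditions come out exactly as in the statement.

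For the converse, given units $u, v$ with $u+v$ and $u+1$ associates and $1+u+v$ a unit, I would directly exhibit a polynomial realizing a 4-cycle. The natural candidate is the interpolating cubic: take the cycle $(0, 1, 1+u, t)$ where $t$ is the associate of $-(1+u+v)$ forced by the computation above (so that $t$ is a unit and the differences $1, u, v', -t$ are in the right associate pattern — here $v'$ is $t - (1+u)$, designed to be an associate of $u$ using the hypothesis that $u+v$ and $u+1$ are associates), and let $f$ be the unique polynomial of degree $\le 3$ over the fraction field with $f(0)=1$, $f(1)=1+u$, $f(1+u)=t$, $f(t)=0$. I must then check $f \in R[x]$, not just in $\mathrm{Frac}(R)[x]$: the Newton divided-difference coefficients are built from the differences of the nodes and of the values, and these are units by construction, so every divided difference lies in $R$; hence $f \in R[x]$ and $(0,1,1+u,t)$ is a genuine 4-cycle in $R$. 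Finally, since the four points are distinct (their consecutive differences are units, in particular nonzero), this is a bona fide 4-cycle.

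\textbf{Main obstacle.} The forward direction's normalization and the divisibility chain are routine; the real work — and the step I expect to be fiddly — is the precise renaming of $s, t$ into $u, v$ so that the three conditions ``$u+v \sim u+1$,'' ``$u$ a unit,'' ``$1+u+v$ a unit'' fall out exactly as stated, and, symmetrically, checking in the converse that the interpolating cubic built from those data has $R$-integral coefficients (equivalently, that all the relevant divided differences are units of $R$). Keeping the associate-vs-equal distinctions straight throughout is where the proof can go wrong.
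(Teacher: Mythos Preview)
The paper does not prove this lemma; it is quoted from Zieve's thesis as Corollary~27 and used as a black box. So there is no in-paper proof to compare against, and I will simply assess your argument.

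Your overall architecture (affine-normalize, read off the unit relations, then interpolate for the converse) is the right one, but each direction has a genuine gap.

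\emph{Forward direction.} After normalizing to the cycle $(0,1,s,t)$ you set $u=s-1$, $v=t-s$, so $u+1=s$ and $u+v=t-1$. You then try to deduce that $u+v$ and $u+1$ are associates from the \emph{consecutive}-difference chain alone, and you correctly sense that this does not work. It cannot: the chain only tells you $1,\,u,\,v,\,-t$ are mutual associates. The missing ingredient is the \emph{diagonal} divisibility: since $x-y\mid f(x)-f(y)$ for any $x,y$, we have $x_1-x_3\mid x_2-x_4$ and $x_2-x_4\mid x_3-x_1$, hence $x_3-x_1$ and $x_4-x_2$ are associates. In the normalized cycle this is exactly $s\sim t-1$, i.e.\ $u+1\sim u+v$. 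Once you use this, the forward direction is complete: $u,v$ are units (consecutive chain), $1+u+v=t$ is a unit (consecutive chain), and $u+1\sim u+v$ (diagonal chain).

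\emph{Converse direction.} Taking the cycle to be $(0,1,1+u,1+u+v)$ and interpolating by a cubic is the right move, but your justification that the Newton divided differences lie in $R$ is incorrect as stated. You assert the node differences are units ``by construction''; in fact $x_3-x_1=1+u$ and $x_4-x_2=u+v$ need \emph{not} be units (e.g.\ in $\Z[1/3]$ with $u=v=1$ they equal $2$). What saves you is the associate hypothesis: writing $u+v=w(1+u)$ with $w$ a unit, one computes
\[
[y_1,y_2,y_3]=\frac{v-u^2}{u(1+u)}=\frac{(w-u)(1+u)}{u(1+u)}=\frac{w-u}{u}\in R,
\]
and similarly $-u-u^{2}-uv-v^{2}=-(1+u)(u+vw)$ forces the factor $1+u$ to cancel in $[y_2,y_3,y_4]$; then $x_4-x_1=1+u+v$ is a unit, so $[y_1,y_2,y_3,y_4]\in R$. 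So the interpolant does lie in $R[x]$, but only because of this cancellation, not because all node gaps are units. You should also note that distinctness of the four points requires $1+u\neq 0$ (equivalently $u+v\neq 0$); this non-degeneracy is needed for the ``if'' direction to produce an honest $4$-cycle.
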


This leads us to a partial reformulation of the problem of characterizing sets of primes that admit a 4-cycle.

\begin{proposition}

If the set of primes $\{p_1, \dots, p_n\}$ does not admit a 4-cycle, then the ring $\Z\left[1/p_1\right.$, $\ldots$, $\left.1/p_n \right]$ has no polynomial with a 4-cycle.

\end{proposition}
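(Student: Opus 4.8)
The statement is essentially the contrapositive of the linear-relation lemma stated in the introduction (the consequence of Corollary 20 in \cite{Zieve}), so the plan is simply to run that implication in reverse and then tidy up the correspondence between ``units summing to zero with no zero proper subsum'' and Definition \ref{defn:AdmitCycle}. Concretely, I would argue by contraposition: assume $\Z\left[1/p_1,\ldots,1/p_n\right]$ has a polynomial $f$ with a 4-cycle $(x_1,x_2,x_3,x_4)$, and show the set of primes $\{p_1,\ldots,p_n\}$ admits a 4-cycle in the sense of Definition \ref{defn:AdmitCycle}.

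First I would recall the divisibility chain already displayed in the excerpt: since $x-y \mid f(x)-f(y)$ in $R[x,y]$, setting $u_i$ to be the successive differences $u_1=x_2-x_1$, $u_2=x_3-x_2$, $u_3=x_4-x_3$, $u_4=x_1-x_4$ gives $u_1\mid u_2\mid u_3\mid u_4\mid u_1$, so all the $u_i$ are associates, hence each $u_i$ is a unit times any fixed one of them; in particular each $u_i$ is a unit of $R=\Z\left[1/p_1,\ldots,1/p_n\right]$. By construction $u_1+u_2+u_3+u_4 = 0$. The units of $R$ are exactly $\pm p_1^{b_1}\cdots p_n^{b_n}$ with $b_j \in \Z$ (possibly negative). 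To land inside Definition \ref{defn:AdmitCycle}, which insists on nonnegative exponents, I would clear denominators: multiply the relation $\sum_i u_i = 0$ through by a sufficiently large product $p_1^{c_1}\cdots p_n^{c_n}$ (with $c_j$ chosen to dominate all negative exponents appearing among the $u_i$), obtaining $\sum_i \epsilon_i u_i' = 0$ with $\epsilon_i \in \{\pm 1\}$ and each $u_i' = p_1^{a_{i1}}\cdots p_n^{a_{in}}$ a genuine monomial with $a_{ij} \ge 0$. Multiplying by a nonzero constant neither creates nor destroys zero subsums, so the last thing to check is the no-zero-proper-subsum condition.

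For that, I would argue that no proper subsum of $u_1,u_2,u_3,u_4$ vanishes, using the 4-cycle structure. No single $u_i$ is zero since the $x_i$ are distinct. A $2$-element subsum vanishing, say $u_i+u_j = 0$, would force $u_i = -u_j$; one checks the three cases ($\{u_1,u_2\}$, $\{u_1,u_3\}$, $\{u_1,u_4\}$, up to cyclic relabeling) lead to a contradiction with the distinctness of the $x_i$ --- e.g. $u_1+u_3=0$ means $x_2-x_1 = -(x_4-x_3)$, i.e.\ $x_1+x_2 = x_3+x_4$, which together with the remaining relation $u_2+u_4=0$, i.e.\ $x_2+x_3 = x_1+x_4$, gives $x_1 = x_3$, a contradiction; the adjacent cases are even easier, since $u_1+u_2 = 0$ says $x_3 = x_1$. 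Finally a $3$-element subsum vanishing is equivalent (since the total is $0$) to the complementary single $u_i$ vanishing, already excluded. Hence the $u_i'$ have no zero proper subsum, and $\{p_1,\ldots,p_n\}$ admits a 4-cycle, contradicting the hypothesis.

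The only mild subtlety --- and the step I would flag as the ``obstacle,'' though it is really just bookkeeping --- is making sure that the passage from the unit relation in $R$ to the monomial relation of Definition \ref{defn:AdmitCycle} is faithful: one must verify that clearing denominators by a common monomial factor does not merge or split subsums, and that the resulting $\epsilon_i u_i'$ still have no zero proper subsum (which follows since scaling by a nonzero element of the integral domain $R$ is injective on every subsum). With that observed, the proof is complete; alternatively, one could phrase the whole thing through Lemma \ref{lem:cor27}, but the direct argument via the divisibility chain is cleaner here.
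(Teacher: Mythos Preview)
Your contrapositive strategy via the divisibility chain is sound and close to the paper's own argument (which routes through Lemma~\ref{lem:cor27} instead), but two steps fail as written. First, from $u_1\mid u_2\mid u_3\mid u_4\mid u_1$ you get that the $u_i$ are pairwise \emph{associates}, not that each $u_i$ is itself a unit of $R$; associates can share a nonunit factor (e.g.\ $2$ and $-6$ in $\Z[1/3]$). The easy fix is to divide $\sum u_i=0$ through by $u_1$ before clearing denominators, so that the four terms $1,\,u_2/u_1,\,u_3/u_1,\,u_4/u_1$ are honest units; otherwise your cleared terms need not have the shape $\pm p_1^{a_{i1}}\cdots p_n^{a_{in}}$ demanded by Definition~\ref{defn:AdmitCycle}.

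Second, and more seriously, your no-zero-proper-subsum check for the nonadjacent pair is incorrect. The relations $u_1+u_3=0$ and $u_2+u_4=0$ both rearrange to the \emph{same} equation $x_1+x_3=x_2+x_4$, so combining them yields no contradiction with distinctness (your rearrangements to $x_1+x_2=x_3+x_4$ and $x_2+x_3=x_1+x_4$ are algebra slips). Worse, this case actually occurs: the cubic $\tfrac{5}{3}x^3-\tfrac{15}{2}x^2+\tfrac{47}{6}x+1 \in \Z\!\left[\tfrac12,\tfrac13\right][x]$ has the 4-cycle $(0,1,3,2)$, whose successive differences $1,2,-1,-2$ satisfy $u_1+u_3=0$. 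So the four consecutive differences coming from a genuine 4-cycle \emph{can} have a vanishing proper subsum, and your argument does not exclude it. To repair the proof you need an additional idea in this degenerate case---for instance, exploiting the diagonal associate relation between $x_1-x_3$ and $x_2-x_4$ to manufacture a different four-term unit relation without a zero subsum, or falling back on the extra structure supplied by Lemma~\ref{lem:cor27}. (In fairness, the paper's own brief proof also does not verify the subsum condition explicitly.)
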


\begin{proof}

By means of contraposition, assume that $R = \Z\left[1/p_1, \ldots, 1/p_n \right]$ has a polynomial with a 4-cycle. Then by Lemma \ref{lem:cor27} we know that there exist units $u, v, w \in \R$ such that $1 + u + v = w$. Units in $R$ are of the form $p_1^{a_1} \cdots p_n^{a_n}$ with $a_1, \dots, a_n \in \Z$. We multiply through to eliminate negative exponents on the primes, which yields an equation of the form

\begin{equation}\epsilon_1t_1 \ + \ \epsilon_2 t_2 \ + \ \epsilon_3 t_3 \ + \ \epsilon_4 t_4 \\ = \ \ 0, \end{equation}

where $\epsilon_i \ \in \ \{-1, 1\}$ and $t_i \ = \ p_1^{a_1} \cdots p_n^{a_n}$ with each $a_i$ a positive integer. Therefore, we see that $\{p_1, \dots, p_n\}$ admits a 4-cycle.

\end{proof}

\subsection{Singleton Inversion Sets}

We turn to the proof of Theorem \ref{prop:singleton}, which states a singleton inversion set $\Z[1/p]$ admits a 4-cycle if and only if $p = 2$ or $3$.




\begin{proof}[Proof of Theorem \ref{prop:singleton}]

First, consider the case when $p = 2$. Let $R = \Z \left[ 1/2 \right], u = 2, v = 1$. Then, by Lemma \ref{lem:cor27}, $\Z\left[ 1/2 \right]$ admits a 4-cycle.

Next, consider the case when $p = 3$. Letting $R = \Z \left[ 1/3 \right]$ and $u = v = 1$, by Lemma \ref{lem:cor27} we now have that $\Z\left[ 1/3 \right] $ admits a 4-cycle.  

Otherwise, let $p > 3$ be a prime. We know that $\Z \left[1/p \right]$ admits a 4-cycle if and only if there exist values of $a_i$ such that

\begin{equation}\pm p^{a_1} \pm p^{a_2} \pm p^{a_3} \pm p^{a_4}\ = \ 0\end{equation}

and this equation admits no zero proper subsum.

By multiplying by the appropriate power of $p$, namely $p^{- \min a_i}$, we rewrite the equation as

\begin{equation}1 \pm p^{b_1} \pm p^{b_2} \pm p^{b_3}\ = \ 0, \end{equation}

where $b_1, b_2, b_3 \geq 0$ and at least one sign is negative. Note that, disregarding solutions to this equation that admit a zero proper subsum, looking at this equation $\bmod \, p$ we have either

\begin{equation}1 \equiv 0, \; 2 \equiv 0, \text{ or } 3 \equiv 0 \bmod p,\end{equation}

depending on the number of $b_i = 0$. However, since $p > 3$, this is a contradiction. Therefore $\Z \left[1/p \right]$ admits a 4-cycle if and only if $p = 2$ or $3$.

\end{proof}

\begin{example}

The polynomial $f(x) = -\frac{2}{3}x^3 + 4x^2 - \frac{19}{3} x + 5$ has the 4-cycle $(1,2,3,4)$ in $\Z \left[ 1/3 \right]$.

\end{example}

The proposition above answers for almost all rings $\Z\left[1/p \right]$ (except $p = 2$) the question of which cycle lengths are allowed. The case of $p = 2$ is handled completely by Narkiewicz \cite{Nar}[Lemmas 5 \& 6]. In addition, Narkiewicz provides every example of a polynomial in $\Z[\frac{1}{2}]$ with a $4$-cycle.

\subsection{Other Inversion Sets Admitting 4-Cycles} \label{sec:doubletonsadmit4cycles}


As soon as we consider slightly larger inversion sets, say of cardinality 2, the picture turns murky. Using our reformulation of the problem in the introduction, it is often a matter of algebra to find inversion sets with special structure that admit 4-cycles. We give three examples in Theorem \ref{thm:doubletonclass} (restated below for convenience), with full explanation for the first, and suggest others.\\ \

\noindent \textbf{Theorem \ref{thm:doubletonclass}} (Partial Classification of Doubleton Inversion Sets) \emph{Fix a positive integer $n$. An inversion set with two elements admits a 4-cycle if any of the following hold:
\begin{enumerate}
\item it is of the form $\{p,p+2\}$, with $p$ and $p+2$ both prime,
\item it is of the form $\{p,p^n-2\}$, with $p$ and $p^n-2$ both prime,
\item it is of the form $\{p,2p+1\}$, with $p$ and $2p+1$ both prime.
\end{enumerate}
}



\begin{proof}[Proof of (1)] Using our reformulation, $\{p,p+2\}$ admits a 4-cycle if we can write

\begin{equation} u_1 \ + \ u_2 + \ u_3 \ + \ u_4 \ \ = \ \ 0 \end{equation}

with $u_i = \pm p^{a_{i1}}(p+2)^{a_{i2}}$, each $a_{ij}$ a nonnegative integer, and the set of $u_i$'s has no zero proper subsum.

Write

\begin{align}
u_1 \ & = \ p+2 \nonumber \\
u_2 \ & = \ -1 \nonumber \\
u_3 \ & = \ -1 \nonumber \\
u_4 \ & = \ -p.
\end{align}

The result follows.

\end{proof}

Similar proofs for the other cases are given in Appendix \ref{app:doubletonclass}.

To actually construct a polynomial $f(x)$ in $\Z[1/p,1/(p+2)][x]$ that admits a 4-cycle, simply choose a 4-cycle $(x_1,x_2,x_3,x_4)$ with appropriate step sizes $u_i$. Using Lagrange interpolation with $f(x_1) \ = \ x_2$, and so on, one can construct $f$. 

\begin{example}[Example of (1)] \label{4cyclearbtriv} Consider the polynomial

\begin{equation} f(x) \ \ = \ \ -\frac{2}{35}x^3 \ - \ \frac{4}{35}x^2 \ + \ \frac{221}{35}x \ + \ \frac{101}{7}\ \in\ \Z\left[\frac 15, \frac 17 \right][x]. \end{equation}

It is easy to verify that $f$ has the 4-cycle $(-10,-3,-4,-9)$. This example also shows that the step sizes $u_i$ may be reordered.

\end{example}

Other interesting inversion sets of size 2 that admit 4-cycles exist. There are also larger inversion sets that admit 4-cycles. In fact, it is trivial to find inversion sets of arbitrary size that admit 4-cycles: Simply add in primes to an inversion set of size 2 that already admits 4-cycles. For example, to get an inversion set of size 3 that admits 4-cycles, you might consider the polynomial $f$ in Example \ref{4cyclearbtriv} viewed as an element of the ring $\Z [1/5,1/7,1/37][x]$. Thus, the bulk of our paper is dedicated to investigating inversion sets that \textbf{avoid} 4-cycles, as this problem is more interesting. 




\subsection{Separations and Avoiding 4-Cycles}\label{subsec:avoiding4cycles}


The following theorem from Green is the starting point of our investigations of inversion sets avoiding 4-cycles.




\begin{theorem}[Theorem 1.4, \cite{Green}] \label{thm:Green}

Write $\mathcal{P}$ for the set of primes. Every subset of $\mathcal{P}$ of positive upper density contains a 3-term arithmetic progression.

\end{theorem}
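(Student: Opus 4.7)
The plan is to follow a transference strategy that reduces the statement to a theorem of Roth-type in a dense setting. Note first that since the primes have natural density zero in $\N$, the phrase ``positive upper density'' here must be understood relative to $\mathcal{P}$ itself: we are assuming that $|A \cap [1,N]| \gtrsim N/\log N$ for infinitely many $N$, and we must find $a, a+d, a+2d \in A$ with $d \neq 0$.

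The natural approach is Fourier-analytic transference. First, I would apply the ``$W$-trick'' to deal with the bias of primes modulo small integers: set $W = \prod_{p \leq w} p$ for a slowly growing parameter $w = w(N)$, and for each residue $b$ coprime to $W$ consider the pulled-back set $A_b = \{n : Wn + b \in A\}$. A pigeonhole argument in $b$ produces some residue class along which $A_b$ retains a positive share of the transferred mass, and any 3-term AP found in $A_b$ lifts to a 3-term AP in $A$.

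Next, I would construct a pseudorandom majorant $\nu : \N \to \R_{\geq 0}$ that dominates (a constant multiple of) the indicator of the $W$-adjusted primes on $[1,N]$ and is ``uniform'' in the sense that $\nu - 1$ has small Fourier coefficients (equivalently, small Gowers $U^2$ norm). Such a majorant can be built from truncated divisor sums of the Goldston–Yıldırım type. Given $\nu$, the transference principle decomposes the normalized indicator of $A_b$ as $f = f_1 + f_2$ where $0 \leq f_1 \leq C$ has density $\gtrsim \delta$ in $[1,N]$ and $\|f_2\|_{U^2}$ is $o(1)$. One then counts 3-APs: the contribution from $f_1$ is bounded below by Roth's theorem in the dense setting, while the $f_2$ contribution is negligible by the $U^2$ control and a standard generalized von Neumann inequality that uses the linear forms condition on $\nu$.

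The main obstacle is the construction and verification of the pseudorandomness of $\nu$: proving that the Goldston–Yıldırım truncated-divisor majorant satisfies the $U^2$-type linear forms condition uniformly in $W$ is the technical heart of Green's argument and requires careful asymptotics of singular series. Once this input is in place, the transference and the dense Roth theorem combine routinely to yield a positive lower bound on the count of 3-APs in $A$, completing the proof.
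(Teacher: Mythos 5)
This statement is not proved in the paper: it is quoted verbatim as Theorem~1.4 of Green's ``Roth's theorem in the primes'' and used as a black box (the paper only draws the easy corollary that a set of primes avoiding a linear $4$-cycle relation has density zero in $\mathcal{P}$). So there is no ``paper's proof'' to compare against; what you have written is a sketch of Green's own proof, and the right question is whether your sketch is a fair account of it.

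On that score your outline is essentially correct in spirit but is phrased in the later Green--Tao transference vocabulary rather than in the form Green actually used in that paper. Green does use the $W$-trick and does reduce to a Roth-type statement for a bounded dense function, but his majorant is an \emph{enveloping sieve} in the style of Ramar\'e--Ruzsa (a Selberg-sieve weight), not a Goldston--Y{\i}ld{\i}r{\i}m truncated divisor sum, and the key analytic input is a discrete \emph{restriction estimate} for the primes (an $L^p$ bound on Fourier coefficients of functions majorized by the sieve weight, analogous to Tomas--Stein), together with a Varnavides-type averaging of Roth's theorem, rather than a $U^2$ generalized von Neumann inequality with a linear-forms condition. The decomposition into a bounded ``dense model'' plus a Fourier-small error is present in Green's argument, so your $f = f_1 + f_2$ picture is morally right, but attributing the error control to a Gowers-norm linear-forms argument describes Green--Tao (2008) more than Green (2005). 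If you intend this as a citation of a known theorem, as the paper does, you should simply cite Green; if you intend it as a proof sketch, it should be corrected to reflect the restriction-theoretic mechanism, or explicitly presented as the equivalent modern transference reformulation. Either way, be clear that the technical heart --- verifying the majorant's Fourier/restriction properties uniformly in $W$ --- is a substantial piece of analytic number theory that your sketch does not supply.
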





Green's result immediately implies the following useful characterization.

\begin{prop}

If a set of primes does not linearly admit a 4-cycle, then it has density 0 in the primes.

\end{prop}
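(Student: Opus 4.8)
The plan is to contrapose: suppose a set of primes $S$ has positive upper density in $\mathcal{P}$, and show that $S$ linearly admits a 4-cycle. By Green's theorem (Theorem \ref{thm:Green}), $S$ contains a 3-term arithmetic progression, say $p, p+d, p+2d$, all in $S$, with common difference $d > 0$. The key observation is that such a progression supplies exactly the kind of linear relation demanded by Definition \ref{defn:AdmitCycle}: from $p + (p+2d) = 2(p+d)$ we get
\begin{equation}
p \ + \ (p+2d) \ - \ (p+d) \ - \ (p+d) \ = \ 0,
\end{equation}
which is a sum $\epsilon_1 u_1 + \epsilon_2 u_2 + \epsilon_3 u_3 + \epsilon_4 u_4 = 0$ with each $\epsilon_i \in \{-1,1\}$ and each $u_i$ a product of primes from $S$ to the first power only (indeed each $u_i$ is a single prime), so the relation is of the ``linear'' type. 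First I would verify the no-zero-proper-subsum condition: a proper subsum of $\{p, -(p+d), -(p+d), p+2d\}$ vanishing would force either $p = p+d$, or $p+2d = p+d$, or $p = p+2d$, or $2(p+d) = 2p+2d$ adjusted to a two-term sum among the remaining — each of which forces $d = 0$, contradicting that the progression is nonconstant (which we may assume since $S$ having positive upper density means $S$ is infinite, so it contains genuine 3-APs with $d \neq 0$). Hence the relation has no zero proper subsum, and $S$ linearly admits a 4-cycle.

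There is one subtlety to address: the three terms $p$, $p+d$, $p+2d$ need not be \emph{distinct} primes — but in a genuine nonconstant 3-AP they are three distinct integers, and all three lie in $S \subset \mathcal{P}$, so they are automatically three distinct primes; the only way a prime could repeat in the list of $u_i$'s is the intentional double appearance of $p+d$, which is exactly what is allowed (the $u_i$'s are not required to be distinct as elements, only to have no vanishing proper subsum). Similarly I should confirm that the exponents $a_{ij}$ being in $\{0,1\}$ is met: writing the inversion set as $\{q_1, \dots, q_k\} \supseteq \{p, p+d, p+2d\}$, we have $u_1 = p$, $u_2 = u_3 = p+d$, $u_4 = p+2d$, each a single prime to the first power and all other primes to the zeroth power, so indeed every $a_{ij} \in \{0,1\}$.

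The argument is essentially immediate once Green's theorem is invoked, so there is no real obstacle — the only thing requiring care is the bookkeeping around the no-subsum condition and the distinctness of the progression terms, both of which reduce to the single fact $d \neq 0$. It is worth remarking that this proof shows more, and lines up with Corollary \ref{cor:upperdensity4cycle} cited in the introduction: any inversion set of positive upper density admits a 4-cycle \emph{linearly}, via a 3-AP of primes. The contrapositive is exactly the stated proposition: a set of primes not linearly admitting a 4-cycle contains no nonconstant 3-AP of primes, hence by Green's theorem has upper density $0$ in $\mathcal{P}$, and in particular density $0$.
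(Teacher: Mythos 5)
Your proof is correct and takes essentially the same approach as the paper: invoke Green's theorem via the contrapositive to extract a 3-term AP of primes $p, p+d, p+2d$ and use the identity $p + (p+2d) - (p+d) - (p+d) = 0$ to witness a linear 4-cycle. You additionally spell out the no-zero-proper-subsum check (reducing it to $d \neq 0$), which the paper leaves implicit.
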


\begin{proof}

Suppose we have a set of primes with a 3-term arithmetic progression; that is, we have distinct primes $p_1, \ p_2, \ p_3$ such that $p_2 = p_1 + a$ and $p_3 = p_2 + 2a$ where $a \in \Z$. Then

\begin{align}\label{eq:arithprog}
p_3 - p_2 - p_2 + p_1  \ = \  p_1 + 2 a - (p_1 + a) - (p_1 + a) + p_1  \ = \  0,
\end{align} and this set of primes linearly admits a 4-cycle. Thus we have that if a set of primes does not linearly admit a 4-cycle, then it contains no arithmetic progressions. Then, by Theorem \ref{thm:Green}, we have that any set of primes that does not linearly admit a 4-cycle has density 0 in the primes.

\end{proof}

Note that this does not guarantee that $u+v$ and $u+1$ will be associates, to satisfy the conditions of Lemma \ref{lem:cor27}.


%


%


\begin{cor}\label{cor:upperdensity4cycle}

Every subset of $\mathcal{P}$ of positive upper density must linearly admit a 4-cycle.

\end{cor}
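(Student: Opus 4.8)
The plan is to deduce Corollary \ref{cor:upperdensity4cycle} directly from Green's theorem (Theorem \ref{thm:Green}) together with the elementary computation already recorded in the proof of the preceding proposition. Suppose $S \subseteq \mathcal{P}$ has positive upper density. By Theorem \ref{thm:Green}, $S$ contains a $3$-term arithmetic progression, say $p_1, p_2, p_3 \in S$ with $p_2 = p_1 + a$ and $p_3 = p_1 + 2a$ for some nonzero $a \in \Z$. The identity
\begin{equation}
p_3 - p_2 - p_2 + p_1 \ = \ (p_1 + 2a) - (p_1 + a) - (p_1 + a) + p_1 \ = \ 0
\end{equation}
then exhibits the required linear relation. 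Explicitly, one sets $u_1 = p_3$, $u_2 = u_3 = p_2$, $u_4 = p_1$ with signs $\epsilon_1 = \epsilon_4 = +1$ and $\epsilon_2 = \epsilon_3 = -1$; each $u_i$ is a product of primes from $S$ to the first power, so the relation is linear in the sense of Definition \ref{defn:AdmitCycle}.

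The one point that needs a word of care — and the only real obstacle — is verifying that no proper subsum vanishes, which Definition \ref{defn:AdmitCycle} requires. With $\epsilon_1 u_1 = p_3$, $\epsilon_2 u_2 = -p_2$, $\epsilon_3 u_3 = -p_2$, $\epsilon_4 u_4 = p_1$, a vanishing subsum of size $1$ is impossible since each term is a nonzero prime; a subsum of size $2$ would force either $p_3 = p_2$, $p_3 = p_1$, $p_2 = p_1$, or $p_2 - p_2 = 0$, and the first three are excluded because $p_1, p_2, p_3$ are distinct primes while the last pairing, $\{-p_2, -p_2\}$, also fails since $-2p_2 \neq 0$; a subsum of size $3$ is the complement of a size-$1$ subsum, already handled. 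Hence no proper subsum is zero, so $S$ linearly admits a $4$-cycle.

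One should note a minor bookkeeping issue inherited from the earlier proposition: to invoke the no-proper-subsum condition cleanly we need the progression to be nontrivial, i.e.\ $a \neq 0$, which is automatic since $p_1, p_2, p_3$ are distinct. I would also remark that Green's theorem gives a genuine arithmetic progression inside $S$ itself, so there is no issue of the intermediate term $p_2$ appearing with multiplicity — it simply appears twice in the relation, which is permitted. Assembling these observations yields the corollary immediately; essentially it is a restatement of the contrapositive of the preceding proposition, now phrased positively.

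\begin{proof}
Let $S \subseteq \mathcal{P}$ have positive upper density. By Theorem \ref{thm:Green}, $S$ contains a $3$-term arithmetic progression: there are distinct primes $p_1, p_2, p_3 \in S$ and a nonzero integer $a$ with $p_2 = p_1 + a$ and $p_3 = p_1 + 2a$. Then, exactly as in the computation \eqref{eq:arithprog},
\begin{equation}
p_3 \ - \ p_2 \ - \ p_2 \ + \ p_1 \ = \ 0.
\end{equation}
Taking $u_1 = p_3$, $u_2 = u_3 = p_2$, $u_4 = p_1$ and $\epsilon_1 = \epsilon_4 = 1$, $\epsilon_2 = \epsilon_3 = -1$, each $u_i$ is a product of primes of $S$ with all exponents in $\{0,1\}$, so the only thing to check is that $\epsilon_1 u_1 + \epsilon_2 u_2 + \epsilon_3 u_3 + \epsilon_4 u_4$ has no zero proper subsum. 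A single term $\pm p_i$ is nonzero. A two-term subsum is one of $p_3 - p_2$, $p_3 + p_1$, $-p_2 - p_2$, $-p_2 + p_1$, $-2p_2$, $p_3 - p_2 + p_1$ type; among the genuine pairs, $p_3 - p_2 \neq 0$ and $p_1 - p_2 \neq 0$ since $p_1, p_2, p_3$ are distinct, $p_3 + p_1 \neq 0$ and $-2p_2 \neq 0$ since all are positive primes. A three-term subsum is the complement of a single-term subsum and hence also nonzero. Therefore $S$ linearly admits a $4$-cycle.
\end{proof}
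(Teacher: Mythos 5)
Your proof is correct and takes essentially the same approach as the paper: the corollary is just the contrapositive of the proposition immediately preceding it, which applies Green's theorem to extract a $3$-term arithmetic progression $p_1, p_2, p_3$ from the set and encodes it as the vanishing sum $p_3 - p_2 - p_2 + p_1 = 0$. Your explicit verification that no proper subsum vanishes is a small but worthwhile addition, since the paper leaves that check implicit.
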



%


%

%

We now construct sets of primes that linearly avoid 4-cycles. We consider equations of the form

\begin{align} \label{eq:FourTerms}
\epsilon_1t_1 + \epsilon_2 t_2 + \epsilon_3 t_3 + \epsilon_4 t_4 & \ = \  0, \ \ \ \epsilon_i \in \{-1, 1\},\ \ \ t_i  \ = \  p_1^{a_{i1}} \cdots p_n^{a_{in}}, \ \ \ a_{ij} \in \{0, 1\}
\end{align}

and discount trivial solutions, that is, instances where the set $\{\epsilon_i t_i\}$ contains a proper subsum equal to 0, as discussed in Definition \ref{defn:AdmitCycle}.


Without loss of generality, let the term $t_1$  be maximal in the set of terms $\{t_1, t_2, t_3, t_4\}$. If $t_1 > t_2 + t_3 + t_4$ then it is easy to see that $\epsilon_1t_1 + \epsilon_2 t_2 + \epsilon_3 t_3 + \epsilon_4 t_4 \neq 0$ for all choices of $\epsilon_i$.

\begin{lemma}[Separation Lemma]\label{sep_lemma}  Suppose we have an ordering of positive terms $\{x_1,x_2,\ldots,x_n\}$ such that $x_{i-1} < x_i$ holds for each $2 \leq i \leq n$. Also suppose that given an $x_i$, for any three distinct terms $x_{j_1}, x_{j_2}, x_{j_3} < x_i$ we have $x_i > x_{j_1} + x_{j_2} + x_{j_3}$. Then there is no set of $t_i$'s that non-trivially satisfy

\[ \epsilon_1t_1 + \epsilon_2 t_2 + \epsilon_3 t_3 + \epsilon_4 t_4  \ = \  0,
\]

where $t_i \in \{x_i\}$ and $\epsilon_i = \{-1,1\}$.

\end{lemma}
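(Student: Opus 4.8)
The plan is to argue by contradiction: suppose a non-trivial solution $\epsilon_1 t_1 + \epsilon_2 t_2 + \epsilon_3 t_3 + \epsilon_4 t_4 = 0$ exists with each $t_i$ drawn from the ordered set $\{x_1,\ldots,x_n\}$. First I would reduce to the ``generic'' configuration. Since the four values $t_1,t_2,t_3,t_4$ are elements of $\{x_1,\dots,x_n\}$, they need not be distinct; but if two of them are equal, say $t_i = t_j$, then either they carry opposite signs (giving a zero proper subsum, hence a trivial solution, contradiction) or the same sign, in which case I can replace $\epsilon_i t_i + \epsilon_j t_j$ by a single term of the same sign and am left with a shorter zero-sum of units — and I would observe that the hypothesis of the lemma also forbids such shorter relations, since a relation with one, two, or three terms is an even easier instance of the separation condition. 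So without loss of generality I may assume $t_1,t_2,t_3,t_4$ are \emph{distinct} elements of $\{x_1,\dots,x_n\}$, and relabel so that $t_1 > t_2 > t_3 > t_4$, with $t_1 = x_i$ for the appropriate index $i$ and $t_2,t_3,t_4$ three distinct terms strictly below $x_i$ in the ordering.

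Next I would extract the sign of the leading term. In the equation $\epsilon_1 t_1 + \epsilon_2 t_2 + \epsilon_3 t_3 + \epsilon_4 t_4 = 0$ we have $|\epsilon_1 t_1| = t_1$ and $|\epsilon_2 t_2 + \epsilon_3 t_3 + \epsilon_4 t_4| \le t_2 + t_3 + t_4$ by the triangle inequality. But the hypothesis, applied to $x_i = t_1$ and the three distinct smaller terms $t_2,t_3,t_4$, gives $t_1 > t_2 + t_3 + t_4$. Hence $|\epsilon_1 t_1| > |\epsilon_2 t_2 + \epsilon_3 t_3 + \epsilon_4 t_4|$, so $\epsilon_1 t_1 + \epsilon_2 t_2 + \epsilon_3 t_3 + \epsilon_4 t_4 \neq 0$ regardless of the choice of signs. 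This contradicts the assumed relation and completes the argument. (This is exactly the elementary observation flagged in the paragraph just before the lemma, namely that a strictly dominant term cannot be cancelled — I am simply packaging it with the reduction to the distinct-terms case.)

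The only real subtlety — and where I would be most careful in writing the details — is the reduction step: making sure that collapsing repeated terms, or discarding opposite-sign pairs, genuinely leaves one in a situation still covered by the hypothesis, and that the ``no zero proper subsum'' clause is used precisely where needed to rule out the degenerate cancellations. Everything after that is the one-line triangle-inequality estimate. I do not expect any genuine obstacle here; the content of the lemma is the separation hypothesis itself, and the proof is just bookkeeping plus the inequality $t_1 > t_2 + t_3 + t_4$.
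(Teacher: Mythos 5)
Your core strategy---isolate the dominant term and defeat it by a triangle-inequality estimate $|\epsilon_1 t_1| > t_2 + t_3 + t_4 \geq |\epsilon_2 t_2 + \epsilon_3 t_3 + \epsilon_4 t_4|$---is exactly the paper's, and that part is fine. You also correctly identify the reduction to the distinct-terms case as the only delicate point; the paper's own proof simply declares ``it suffices to show the result holds for $\{x_1,x_2,x_3,x_4\}$ \ldots\ let $t_i = x_i$'' and never engages with the possibility of repeated $t_i$'s at all. But the repair you sketch for that step does not go through.

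When $t_i = t_j$ carry the same sign, the combined contribution is $\epsilon_i t_i + \epsilon_j t_j = \pm 2t_i$, and $2t_i$ is not an element of $\{x_1,\ldots,x_n\}$. So the ``shorter zero-sum'' you obtain is of the form $\pm 2a \pm b \pm c = 0$, which is not an instance of the relation the separation hypothesis governs; you cannot appeal to ``a relation with one, two, or three terms is an even easier instance of the separation condition,'' because the hypothesis says nothing about coefficients of size $2$. In fact the hypothesis as literally stated does not rule this out: with $x_1=1$, $x_2=2$, $x_3=10$, $x_4=14$, $x_5=34$, every $x_i$ exceeds the sum of any three distinct smaller terms (tightest case $34 > 2+10+14$), yet $-10-10-14+34=0$ and this relation has no zero proper subsum. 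The repeated case has to be handled directly (compare $2a$ to $|\epsilon_3 b + \epsilon_4 c|$, split on which of $a,b,c$ is largest), and making that go through cleanly requires the slightly stronger separation $3s<t$ for $s<t$, which is precisely what the paper actually establishes in the propositions that invoke this lemma, and which does kill the example above since $3\cdot 14 > 34$. So the triangle-inequality idea is right, but the reduction as sketched would not survive the careful writeup you say you would supply; you need to argue the repeated case from scratch rather than fold it into the distinct case.
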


\begin{proof} It suffices to show the result holds for the ordering of terms $\{x_1, x_2, x_3, x_4\}$ such that $x_{i-1} < x_{i}$ for each $2 \leq i \leq 4$ and $x_4 > x_1 + x_2 + x_3$. In this case, let $t_i = x_i$ for $1 \leq i \leq 4$. Then $t_1 + t_2 + t_3 - t_4 < 0$. It is now easy to see that $\epsilon_i t_i + \epsilon_i t_i + \epsilon_i t_i + \epsilon_i t_i \neq 0$ for all choices of $\epsilon_i$.

\end{proof}

With Lemma $\ref{sep_lemma}$, we characterize a set of primes which linearly avoids a 4-cycle in the next result. In particular, we note that the conditions of the lemma are satisfied if $x_i > 3 x_{i-1}$.




\begin{prop}

Fix a positive integer $k$, and let $p_1 > 3, \;  p_j > 3\prod_{i=1}^{j-1}p_i$ for $2 \le j \le k$ be prime. Then the inversion set $\{p_1,p_2,\ldots,p_k\}$ linearly avoids a 4-cycle. 

\end{prop}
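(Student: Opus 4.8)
The plan is to verify that the hypothesis on the primes $p_1 < p_2 < \cdots < p_k$ forces the full collection of products $\{p_1^{a_1}\cdots p_k^{a_k} : a_i \in \{0,1\}\}$ to satisfy the "gap" condition of the Separation Lemma (Lemma~\ref{sep_lemma}), and then invoke that lemma directly. Concretely, for a $0/1$ exponent vector $\mathbf{a} = (a_1,\ldots,a_k)$ write $t_{\mathbf{a}} = \prod_{i : a_i = 1} p_i$. These $2^k$ terms are all distinct and positive, so they can be listed in increasing order; I must show that whenever $t_{\mathbf{b}}, t_{\mathbf{c}}, t_{\mathbf{d}}$ are three distinct terms all strictly smaller than $t_{\mathbf{a}}$, we have $t_{\mathbf{a}} > t_{\mathbf{b}} + t_{\mathbf{c}} + t_{\mathbf{d}}$.

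The key step is a size estimate. Let $m$ be the largest index with $a_m = 1$ (such an $m$ exists, since if $\mathbf{a}$ were the zero vector then $t_{\mathbf{a}} = 1$ would be the minimum, not exceeded by three other positive terms). Then $t_{\mathbf{a}} \ge p_m$. On the other hand, any term $t_{\mathbf{b}}$ with $t_{\mathbf{b}} < t_{\mathbf{a}}$ must in particular satisfy $t_{\mathbf{b}} < p_m \prod_{i < m} p_i$, hence $t_{\mathbf{b}} \le \prod_{i=1}^{m-1} p_i$ — because any $0/1$ product that uses some prime $p_j$ with $j \ge m$ is at least $p_m$, while $t_{\mathbf{a}}$ could be as small as $p_m$ itself; here one has to be slightly careful, so instead I argue: every term is either $\ge p_m$ (if it uses a prime of index $\ge m$) or $\le \prod_{i=1}^{m-1} p_i$ (if it uses only primes of index $< m$). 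I would split into cases according to whether the three smaller terms use primes of index $\ge m$. If all three of $t_{\mathbf{b}}, t_{\mathbf{c}}, t_{\mathbf{d}}$ use only primes of index $< m$, then each is $\le \prod_{i=1}^{m-1} p_i$, so their sum is $\le 3\prod_{i=1}^{m-1} p_i < p_m \le t_{\mathbf{a}}$ by the defining hypothesis on $p_m$ (using the convention that an empty product is $1$, and noting $p_1 > 3$ handles the $m=1$ edge case). If one of them, say $t_{\mathbf{d}}$, uses a prime of index $\ge m$ but $t_{\mathbf{d}} < t_{\mathbf{a}}$, then $t_{\mathbf{a}}$ itself must use a prime of index $> $ the top index of $t_{\mathbf{d}}$ or a strictly larger subset at that level; iterating this observation reduces to comparing $t_{\mathbf{a}}$ with terms supported on a strictly smaller prefix, which is exactly the inductive shape of the hypothesis $p_j > 3\prod_{i<j} p_i$.

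Cleaner still: I will prove by induction on $k$ that the set $S_k = \{\prod_{i \in I} p_i : I \subseteq \{1,\ldots,k\}\}$ satisfies the Separation Lemma hypothesis. For $k=1$, $S_1 = \{1, p_1\}$ has only two elements, so the hypothesis is vacuous. For the inductive step, partition $S_k = S_{k-1} \sqcup p_k S_{k-1}$; every element of $p_k S_{k-1}$ is at least $p_k > 3\prod_{i=1}^{k-1} p_i \ge 3 \cdot (\text{sum of any three elements of } S_{k-1})$, since every element of $S_{k-1}$ divides $\prod_{i=1}^{k-1} p_i$ and hence is at most $\prod_{i=1}^{k-1} p_i$. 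So: (a) if $t_{\mathbf{a}} \in S_{k-1}$, all smaller terms lie in $S_{k-1}$ too and we are done by induction; (b) if $t_{\mathbf{a}} = p_k s$ with $s \in S_{k-1}$, then for three smaller terms, either all lie in $S_{k-1}$ — in which case their sum is $< p_k \le t_{\mathbf{a}}$ — or at least one lies in $p_k S_{k-1}$, say $p_k s'$ with $s' < s$, and then the three smaller terms, after possibly dividing the ones in $p_k S_{k-1}$ by $p_k$, reduce to the statement for $S_{k-1}$ scaled appropriately; the mixed case (some terms in $S_{k-1}$, some in $p_k S_{k-1}$) needs the crude bound that any element of $S_{k-1}$ is $< p_k s'$ anyway. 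The main obstacle is bookkeeping in case (b): making sure the comparison $p_k s > p_k s' + p_k s'' + (\text{stuff in } S_{k-1})$ follows both from the inductive inequality $s > s' + s'' + s'''$ and from the growth bound $p_k > 3\prod_{i<k}p_i$. Once the hypothesis of Lemma~\ref{sep_lemma} is verified for $S_k$, the conclusion that $\{p_1,\ldots,p_k\}$ linearly avoids a $4$-cycle is immediate, since the terms $t_i$ appearing in \eqref{eq:FourTerms} with $a_{ij} \in \{0,1\}$ are exactly (distinct) elements of $S_k$, and a nontrivial vanishing sum $\epsilon_1 t_1 + \cdots + \epsilon_4 t_4 = 0$ would contradict the lemma.
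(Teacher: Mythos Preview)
Your overall architecture matches the paper's: induct on $k$, split $S_k = S_{k-1} \sqcup p_k S_{k-1}$, and feed the resulting separation into Lemma~\ref{sep_lemma}. The difference is in the inductive invariant. You induct on the Separation Lemma hypothesis itself (``any term exceeds the sum of any three strictly smaller terms''), whereas the paper inducts on the stronger and simpler statement
\[
s,\,t \in S_k,\quad s<t \ \Longrightarrow\ 3s<t,
\]
and only afterwards notes that this implies the Separation Lemma hypothesis.

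This is exactly where your case~(b) bookkeeping breaks down. In the mixed subcase $t_{\mathbf a}=p_k s$ with, say, two smaller terms $p_k s'$, $p_k s''\in p_k S_{k-1}$ and one $t_d\in S_{k-1}$, you need $p_k s > p_k s' + p_k s'' + t_d$, i.e.\ (using $t_d<p_k$) the two-term inequality $s > s' + s''$ in $S_{k-1}$. Your inductive hypothesis only gives three-term domination $s> s'+s''+s'''$, which is vacuous when $s$ has fewer than three predecessors in $S_{k-1}$; so the induction does not close as stated. The paper's invariant $s<t\Rightarrow 3s<t$ dispatches all four $(\alpha_{k+1},\beta_{k+1})$ cases in one line each and never needs a mixed-case argument. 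The easy fix to your outline is to replace your inductive hypothesis with this ratio condition; then every case in your partition becomes immediate.
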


\begin{proof}

We establish that for any two terms $s,t$ drawn from the set of products $\{ p_1^{\alpha_1}\cdots p_{n}^{\alpha_{n}} : \alpha_i \in \{0,1\} \}$, if $s < t$ then $3s < t$, giving enough separation to linearly avoid a 4-cycle.

We proceed by induction on the number of primes in the inversion set. If we only have one prime $p > 3$, then $\{p\}$ linearly (and in fact generally by Proposition \ref{prop:singleton}) avoids a 4-cycle. Now suppose that a set of $k$ primes $\{p_1, \dots, p_k\}$ with the separation properties above linearly avoids a 4-cycle. Consider adding in another prime $p_{k+1}$, satisfying $p_{k+1} > 3p_1 \cdots p_k$. Let

\[ S \ := \  \{p_1^{\alpha_1}\cdots p_{k}^{\alpha_{k}} : \alpha_i \in \{0,1\} \}
\]

denote the set of products for which the induction hypothesis applies, and let

\[ S^* \ := \  \{p_1^{\alpha_1}\cdots p_{k+1}^{\alpha_{k+1}} : \alpha_i \in \{0,1\} \}
\]

denote the set of products extended with the possibility of a $p_{k+1}$ factor.

\begin{cla}

For all $s, t \in S^*$, if $s < t$ then $3s < t$.

\end{cla}

To see this, choose $s = p_1^{\alpha_1}\cdots p_{k+1}^{\alpha_{k+1}}$ and $t = p_1^{\beta_1}\cdots p_{k+1}^{\beta_{k+1}}$ in $S^*$ such that $s < t$.

First, if $\alpha_{k+1} = 0$ and $\beta_{n+1} = 0$, then $s, t \in S$, so that $3s < t$ by hypothesis.

Next, if instead we have $\alpha_{k+1} = 1$ and $\beta_{k+1} = 1$, then $s/p_{k+1},t/p_{k+1} \in S$ with $s/p_{k+1} < t/p_{k+1}$, so that by hypothesis we have $3s/p_{k+1} <  t/p_{k+1}$, so that $3s < t$.

Further, suppose $\alpha_{k+1} = 1$ and $\beta_{k+1} = 0$. Then $s \geq p_{k+1} > 3p_1\cdots p_k > t$, which contradicts $s < t$.

Finally, suppose that $\alpha_{k+1} = 0$ and $\beta_{k+1} = 1$. Then we have that $3s \leq 3p_1\cdots p_k < p_{k+1} \leq t$, so that $3s < t$. The claim follows.

Therefore, following the discussion above, for any $k$, provided the separation conditions are met, the inversion set $\{p_1,p_2,\ldots,p_k\}$ linearly avoids a 4-cycle.

\end{proof}

We have discussed the case in which the powers of primes are restricted to first powers (linear avoidance). We now consider the case where we allow powers of primes up to some integer $n$. The ideas behind the proof are very similar.




\begin{prop}

Fix positive integers $n$ and $k$, and let $p_1 > 3, \; p_j > 3\prod_{i=1}^{j-1}p_i^n$ for $2\le j \le k$ be prime. Then the inversion set $\{p_1, p_2, \ldots, p_k\}$ avoids a 4-cycle with $n$-powers. 

\end{prop}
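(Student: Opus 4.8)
The plan is to mimic almost verbatim the proof of the previous proposition (the linear case), replacing the set of squarefree products by the set of products with exponents in $\{0,1,\ldots,n\}$ and tracking the slightly larger separation that is now needed. The key observation is again that if any term $t_1$ in a putative relation $\epsilon_1 t_1 + \epsilon_2 t_2 + \epsilon_3 t_3 + \epsilon_4 t_4 = 0$ strictly exceeds the sum of the other three, then no choice of signs can make the sum vanish; this is exactly the content of the Separation Lemma (Lemma~\ref{sep_lemma}), which does not care whether the $t_i$ are squarefree. So it suffices to prove the following multiplicative separation statement for the relevant set of products.

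\begin{cla}
Let $p_1 > 3$ and $p_j > 3\prod_{i=1}^{j-1} p_i^{n}$ for $2 \le j \le k$, and set
\[
S^{(n)}_k \ := \ \{ p_1^{\alpha_1} \cdots p_k^{\alpha_k} : \alpha_i \in \{0,1,\ldots,n\} \}.
\]
Then for all $s,t \in S^{(n)}_k$ with $s < t$ we have $3s < t$.
\end{cla}

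First I would prove this claim by induction on $k$, exactly paralleling the linear case. The base case $k=1$ is immediate: if $s = p_1^{a} < t = p_1^{b}$ with $a < b$ then $t/s = p_1^{b-a} \ge p_1 > 3$. For the inductive step, write $s = s' p_{k+1}^{\alpha}$ and $t = t' p_{k+1}^{\beta}$ with $s',t' \in S^{(n)}_k$ and $\alpha,\beta \in \{0,1,\ldots,n\}$, and split on the relation between $\alpha$ and $\beta$. If $\alpha = \beta$ then after cancelling $p_{k+1}^{\alpha}$ we are reduced to $s',t' \in S^{(n)}_k$ with $s' < t'$, and the inductive hypothesis gives $3s' < t'$, hence $3s < t$. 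If $\alpha > \beta$, then $s \ge p_{k+1}^{\alpha-\beta} p_{k+1}^{\beta} s'/\!\!\ldots$; more cleanly, $s/t = (s'/t') p_{k+1}^{\alpha-\beta}$, and since $s' \le \prod_{i=1}^{k} p_i^{n}$ and $t' \ge 1$ we get $s/t \ge p_{k+1}/\prod_{i=1}^k p_i^{n} > 3$ when $\alpha - \beta \ge 1$ — wait, that shows $s > 3t$, contradicting $s < t$, so this case cannot occur. Finally if $\alpha < \beta$, then $t/s \ge p_{k+1}/\prod_{i=1}^{k} p_i^{n} > 3$ by the same crude bound ($t \ge p_{k+1}^{\beta-\alpha} \cdot 1$ after cancelling the common $p_{k+1}^{\alpha}$, and $s \le \prod_{i=1}^k p_i^n$), so $3s < t$, completing the induction.

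With the claim in hand the proposition follows immediately: any four distinct terms drawn from $S^{(n)}_k$ can be ordered $x_1 < x_2 < x_3 < x_4$, and since pairwise ratios are at least $3$ we get $x_4 \ge 3 x_3 > x_1 + x_2 + x_3$ (indeed $x_1 + x_2 + x_3 < 3x_3 \le x_4$), so the hypotheses of Lemma~\ref{sep_lemma} are met and there is no nontrivial relation $\epsilon_1 t_1 + \epsilon_2 t_2 + \epsilon_3 t_3 + \epsilon_4 t_4 = 0$ with $t_i \in S^{(n)}_k$. (One should also note the degenerate sub-cases where the four terms are not all distinct as elements of the ring, but those force a zero proper subsum and so are excluded by the definition of admitting a 4-cycle, exactly as in the linear case.) I do not expect any serious obstacle here; the only place demanding a little care is getting the crude inequality $p_{k+1} > 3 \prod_{i=1}^k p_i^{n} \ge 3 s'$ to cover \emph{all} mismatched-exponent cases at once, which is why the hypothesis is stated with the $n$-th powers in the product rather than first powers — this is precisely the adjustment that makes the squarefree argument go through verbatim.
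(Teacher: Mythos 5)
Your proof is correct and follows essentially the same approach as the paper's: both establish by induction on $k$ the multiplicative separation $s < t \Rightarrow 3s < t$ for the set $S^{(n)}_k$, splitting the inductive step into cases by comparing the $p_{k+1}$-exponents, and then invoke the Separation Lemma. One small slip: in the $\alpha > \beta$ case you wrote $s' \le \prod_{i=1}^{k} p_i^{n}$ and $t' \ge 1$, but to lower-bound $s'/t'$ you want the reverse bounds $s' \ge 1$ and $t' \le \prod_{i=1}^{k} p_i^{n}$; the final inequality $s/t \ge p_{k+1}/\prod_{i=1}^k p_i^n > 3$ that you draw from it is nevertheless correct, so this is only a typo.
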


\begin{proof}

We establish that for any two terms $s,t$ drawn from the set of products $\{ p_1^{\alpha_1}\cdots p_{n}^{\alpha_{n}} : \alpha_i \in \{0,1,\ldots,n\} \}$, if $s < t$ then $3s < t$, giving enough separation to avoid a 4-cycle with $n$-powers.

We proceed by induction on the number of primes in the inversion set. If we only have one prime $p > 3$, then $\{p\}$ avoids a 4-cycle by Proposition \ref{prop:singleton}). Now suppose that a set of $k$ primes $\{p_1, \dots, p_k\}$ with the separation properties above avoids a 4-cycle with $n$-powers. Consider adding in another prime $p_{k+1}$, satisfying $p_{k+1} > 3p_1^n \cdots p_k^n$. Let

\[ S \ := \  \{p_1^{\alpha_1}\cdots p_{k}^{\alpha_{k}} : \alpha_i \in \{0,1,\ldots,n\} \}
\]

denote the set of products for which the induction hypothesis applies, and let

\[ S^* \ := \  \{p_1^{\alpha_1}\cdots p_{k+1}^{\alpha_{k+1}} : \alpha_i \in \{0,1,\ldots,n\} \}
\]

denote the set of products extended with the possibility of a $p_{k+1}$ factor.

\begin{cla}

For all $s, t \in S^*$, if $s < t$ then $3s < t$.

\end{cla}

To see this, choose $s = p_1^{\alpha_1}\cdots p_{k+1}^{\alpha_{k+1}}$ and $t = p_1^{\beta_1}\cdots p_{k+1}^{\beta_{k_1}}$ in $S^*$ such that $s < t$.

First, if $\alpha_{k+1} = 0$ and $\beta_{k+1} = 0$, then $s,t \in S$, so that $3s < t$ by hypothesis.

Next, if instead we have $\alpha_{k+1} = \beta_{n+1} \neq 0$, then $s/p_{k+1}^{\alpha_{k+1}}, t/p_{k+1}^{\alpha_{k+1}} \in S$ with $s/p_{k+1}^{\alpha_{k+1}} < t/p_{k+1}^{\alpha_{k+1}}$, so that by hypothesis we have $3s/p_{k+1}^{\alpha_{k+1}} < t/p_{k+1}^{\alpha_{k+1}}$, so that $3s < t$.

Further, suppose $\alpha_{k+1} > \beta_{k+1}$. Then we have

\[ s\ \geq\ p_{k+1}^{\alpha_{k+1}}\ >\ 3p_1^n\cdots p_k^n p_{k+1}^{\alpha_{k+1} - 1}\ \geq\ 3p_1^n\cdots p_k^n p_{k+1}^{\beta_{k+1}}\ >\ p_1^n\cdots p_k^n p_{k+1}^{\beta_{k+1}}\ \geq\ t,
\]

which contradicts $s < t$.

Finally, suppose that $\alpha_{k+1} < \beta_{k+1}$. Then we have

\[ 3s\ \leq\ 3p_1^n\cdots p_k^n p_{k+1}^{\alpha_{k+1}}\ <\ p_{k+1}^{\alpha_{k+1}+1}  \ \leq \  p_{k+1}^{\beta_{k+1}}  \ \leq \  t,
\]

so that $3s < t$.

Therefore, following the discussion above, for any $k$ and $n$, provided the separation conditions are met, the inversion set $\{p_1,p_2,\ldots,p_k\}$ avoids a 4-cycle with $n$-powers.

\end{proof}

Up until now, we have restricted the powers we allow on primes when constructing sets that avoid 4-cycles to some degree. We remove this restriction of possible powers after first proving the following helpful lemma.





\begin{lemma}\label{lem:primeordering}



Let $m > 7$ be an integer.

Fix primes $p_1$ and  $p_2$ such that for all non-negative integers $k$ and all integers $\ell$ with $0\le \ell < m$ it holds that \[p_1^{\ell + k}\ <\ p_1^k p_2^\ell\ <\ \frac{1}{3} p_1^{\ell + \frac{\ell}{m} + k}.\]  Then for any non-negative integers $\ell, k, s,$ and $t$ satisfying \begin{enumerate}\item $0 \le t <m$;\item $0 \le \ell < m$;\item  $t(1 + \frac{1}{m}) + s > \ell(1 + \frac{1}{m}) + k$,\end{enumerate} it holds that $p_1^s p_2^t > p_1^kp_2^{\ell}$.

\end{lemma}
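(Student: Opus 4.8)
The plan is to use the hypothesis on $p_1, p_2$ to convert the additive inequality in condition (3) into a multiplicative comparison of $p_1^s p_2^t$ and $p_1^k p_2^\ell$. The key observation is that the two-sided bound $p_1^{\ell+k} < p_1^k p_2^\ell < \tfrac13 p_1^{\ell + \ell/m + k}$ says that, for exponents $\ell$ in the range $[0,m)$, the quantity $p_1^k p_2^\ell$ lies strictly between $p_1^{\ell+k}$ and $p_1^{\ell(1+1/m)+k}$ (ignoring the harmless factor $\tfrac13$ for the moment). Thus $p_1^k p_2^\ell$ behaves, up to these controlled errors, like a power $p_1^{c}$ of $p_1$ with exponent $c$ somewhere in the interval $[\ell+k,\ \ell(1+1/m)+k]$. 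Condition (3) is precisely the statement that the \emph{right} endpoint of the interval for $p_1^k p_2^\ell$ (namely $\ell(1+1/m)+k$) is strictly less than the \emph{left} endpoint $t(1+1/m)+s$ of the interval for $p_1^s p_2^t$ — wait, one must be careful here; (3) reads $t(1+1/m)+s > \ell(1+1/m)+k$, i.e. the upper exponent of the $t$-term exceeds the upper exponent of the $\ell$-term.

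First I would apply the upper bound from the hypothesis to the larger candidate and the lower bound to the smaller one: write
\[
p_1^s p_2^t \ \geq \ ?\qquad\text{and}\qquad p_1^k p_2^\ell \ < \ \tfrac13\, p_1^{\ell(1+1/m)+k}.
\]
For $p_1^s p_2^t$ I want a lower bound of the form $p_1^s p_2^t > p_1^{t(1+1/m)+s}/3$ is the \emph{wrong} direction, so instead I use $p_1^s p_2^t > p_1^{t+s}$. That alone is not obviously enough, so the real content is to compare the exponent $t+s$ against $\ell(1+1/m)+k$. Here is where condition (3) enters together with the integrality and the range restrictions (1)--(2): since $t(1+1/m)+s > \ell(1+1/m)+k$ and all of $s,t,k,\ell$ are integers with $t,\ell \in [0,m)$, I claim this forces $t + s \geq \ell(1+1/m) + k$, or more precisely that $t+s$ exceeds $\ell(1+1/m)+k$ by enough to absorb the $\tfrac13$. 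Concretely, from (3) we get $s - k > (\ell - t)(1+1/m) \geq (\ell - t) - 1$ using $0 \leq \ell/m < 1$ (since $\ell < m$), and since $s-k$ and $\ell - t$ are integers this gives $s - k \geq \ell - t$, i.e. $s + t \geq k + \ell$. Combined with $p_1 \geq 2$ (in fact the hypothesis forces $p_1$ large) one then checks $p_1^{s+t} \geq p_1^{k+\ell} \geq 3\cdot\tfrac13 p_1^{\ell(1+1/m)+k}$? — this last step needs $p_1^{\ell} \geq 3 p_1^{\ell/m}$, i.e. $p_1^{\ell(1-1/m)} \geq 3$, which may fail when $\ell = 0$. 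So the case $\ell = 0$ (and symmetrically $t = 0$) must be handled separately, and the remaining cases split on whether $s + t$ strictly exceeds $k+\ell$ or equals it; when it strictly exceeds, $p_1^{s+t} \geq p_1 \cdot p_1^{k+\ell}$ and the factor $p_1 > 3$ (forced by $m > 7$ and the hypothesis, or assumed) does the job, while equality $s+t = k+\ell$ needs to be pushed back into (3) to derive a contradiction or a sharper inequality on the fractional parts $s/m$ versus $\ell/m$ — but actually if $s+t = k+\ell$ then $s - k = \ell - t$ and (3) becomes $(\ell-t)/m > 0$, forcing $\ell > t$, hence $s > k$; then $p_1^s p_2^t \geq p_1^s > p_1^{k} \cdot p_1^{s-k}$ and we compare with $p_1^k p_2^\ell < \tfrac13 p_1^{k+\ell(1+1/m)} = \tfrac13 p_1^{k + (s-k) + \ell/m} = \tfrac13 p_1^{s + \ell/m}$, so it suffices that $p_1^{t} \cdot p_1^{s} \geq$ hmm — I realize the cleanest route is to directly lower-bound $p_1^s p_2^t$ by $p_1^{s+t}$ and, when $t \geq 1$, note we actually have the stronger $p_1^s p_2^t > p_1^{s + t}$ with strict inequality having "room," but the fully rigorous bookkeeping of the $\tfrac13$ is the delicate point.

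The honest way to organize the proof, then, is: (a) reduce to showing $p_1^{s+t}\cdot(\text{correction}) \geq \tfrac13 p_1^{\ell(1+1/m)+k}$ by sandwiching both sides with the hypothesis; (b) use integrality plus $\ell, t < m$ to upgrade (3) to $s+t \geq k+\ell$, with equality only when additionally $\ell > t$ and $s > k$; (c) in the strict case $s+t > k+\ell$, peel off one factor of $p_1$ and use $p_1 > 3$; (d) in the equality case, use $s > k$ (so we can peel a factor of $p_1$ off the $s$-power) together with the refined inequality $\ell/m > 0$ coming back out of (3); (e) dispatch the boundary cases $\ell = 0$ or $t = 0$ directly from the definitions. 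I expect step (b)--(d), i.e. correctly tracking when the comparison of integer exponents is strict versus tight and making sure the factor $\tfrac13$ never spoils the inequality, to be the main obstacle; everything else is a routine application of the sandwich bounds in the hypothesis. It will also be worth recording at the outset that the hypothesis $p_1^{\ell+k} < p_1^k p_2^\ell < \tfrac13 p_1^{\ell+\ell/m+k}$ implies in particular (take $\ell = 1, k = 0$) that $p_1 < p_2$ and $p_2 < \tfrac13 p_1^{1+1/m}$, which pins down the relative sizes used implicitly above.
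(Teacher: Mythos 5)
Your overall plan — pass through the quantity $p_1^{s+t}$, use the integrality of the exponents together with the range restrictions $t,\ell<m$ to upgrade condition (3) to $s+t\ge k+\ell$, and then split on whether this is strict or an equality — is the same skeleton the paper uses. In the strict case your argument closes (it is equivalent to the paper's: $s+t>k+\ell$ already forces $s+t>\ell(1+\tfrac1m)+k$ since $\ell/m<1$, whence $p_1^s p_2^t\ge p_1^{s+t}>p_1^{\ell(1+1/m)+k}>p_1^kp_2^\ell$).

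The equality case, however, contains a genuine sign error and is never finished. If $s+t=k+\ell$, then plugging into condition (3) gives
\[
t\bigl(1+\tfrac1m\bigr)+s-\ell\bigl(1+\tfrac1m\bigr)-k\ =\ \tfrac{t-\ell}{m}\ >\ 0,
\]
so $t>\ell$ and hence $s<k$ — the opposite of what you wrote ($\ell>t$, $s>k$). Because of this reversal, your attempt to ``peel a factor of $p_1$ off the $s$-power'' cannot work, and the inequality you wrote, $p_1^s>p_1^k\cdot p_1^{s-k}$, is an identity rather than a strict bound. You also do not need the $\tfrac13$ upper bound in this case at all: with $t>\ell$ and $k=s+t-\ell$, the fact that $p_2>p_1$ (which, as you correctly note, falls out of the hypothesis at $\ell=1,k=0$) lets you conclude directly via
\[
p_1^s p_2^t\ =\ p_1^s\, p_2^{t-\ell}\, p_2^\ell\ >\ p_1^s\, p_1^{t-\ell}\, p_2^\ell\ =\ p_1^{s+t-\ell}p_2^\ell\ =\ p_1^k p_2^\ell.
\]
Once this is in place, the edge cases $\ell=0$ and $t=0$ you flagged require no separate treatment (if $t=0$ then $t>\ell$ is impossible so you are automatically in the strict case; if $\ell=0$ the display above still holds). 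In short: correct the sign in the equality case, abandon the detour through the $\tfrac13 p_1^{\ell(1+1/m)+k}$ bound there, and compare $p_2^{t-\ell}$ with $p_1^{t-\ell}$ directly.
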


\begin{proof}



%

%

%

The lemma is clear whenever $t + s > \ell\left(1 + \frac{1}{m}\right) + k$, as $p_1^s p_2^t > p_1^{t + s} > p_1^{\ell\left(1 + \frac{1}{m}\right) + k} > p_1^kp_2^\ell$.

Otherwise, we must have that $t(1 + \frac{1}{m}) + s > \ell(1+\frac{1}{m}) + k \ge t+s$. Since both $\frac{t}{m}$ and $\frac{\ell}{m}$ are less than $1$, it must be that $t + s = k + \ell.$ In this case, since $t(1 + \frac{1}{m}) + s > \ell(1 + \frac{1}{m}) + k$, we have that $t/m > \ell/m$ and so $t > \ell$. Therefore, $k = s + t- \ell$. Thus,

\begin{equation}p_1^sp_2^t  \ = \  p_1^sp_2^{t-\ell}p_2^\ell  > p_1^s p_1^{t-\ell}p_2^\ell  \ = \  p_1^k p_2^\ell.\end{equation}



\end{proof}









%



%




For our main theorems, we need to assume the following generalization of the ABC conjecture.

\begin{conjecture}\label{c:BrowBrz}[Browkin-Brzezinski]

Given an integer $n > 2$ and an $\epsilon> 0$, there exists a constant $C_{n, \epsilon}$, such that for all integers $a_1, \dots, a_n $ with $a_1+\cdots+ a_n=0$ (and no proper subset having a zero sum), and $\gcd( a_1, \dots, a_n)=1$ we have

\begin{equation}\label{eq:generalabc}
\max(|a_1|,\dots,|a_n|)   \ \leq \  C_{n,\epsilon} ({\rm rad}(a_1 \times \cdots \times a_n))^{2n-5+\epsilon},
\end{equation}

where ${\rm rad}(n)$ is the product of the distinct prime factors of $n$.

\end{conjecture}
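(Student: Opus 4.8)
\noindent\textbf{Proof proposal for Conjecture \ref{c:BrowBrz}.} This is not a result we establish: Conjecture \ref{c:BrowBrz} is the $n$-term generalization of the ABC conjecture (the ``$n$-conjecture'' of Browkin and Brzezinski), and any proof lies well beyond the reach of present techniques. I record here how one would organize an attack, and exactly where it stalls.

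The plan is to isolate the base case $n=3$ first, where the statement is literally the ABC conjecture: writing $a_1+a_2+a_3=0$ with $\gcd(a_1,a_2,a_3)=1$ and putting $a=a_1$, $b=a_2$, $c=-a_3$ gives $a+b=c$ with $\mathrm{rad}(a_1a_2a_3)=\mathrm{rad}(abc)$, so \eqref{eq:generalabc} becomes exactly $\max(|a|,|b|,|c|)\le C_{\epsilon}\,\mathrm{rad}(abc)^{1+\epsilon}$. This already pinpoints the difficulty: a proof of Conjecture \ref{c:BrowBrz} in particular proves ABC.

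For $n\ge 4$ I would next try to bootstrap from $n-1$ to $n$ by a subsum argument: take a minimal vanishing relation $a_1+\cdots+a_n=0$, examine its partial sums, peel off a shorter minimal relation, and control $\mathrm{rad}(a_1\cdots a_n)$ against the radicals of the pieces, iterating down to triples. This is the route by which the function-field analogue is propagated, and over $\mathbb{C}(t)$ it succeeds because one has an exact tool: the Wronskian $W(a_1,\dots,a_{n-1})$ is forced to be divisible by high powers of the zeros and poles of the $a_i$, which yields the Brownawell--Masser / Mason--Stothers degree bound by comparing degrees. Reproducing this Wronskian computation is what any integer proof would have to imitate.

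The hard part, and the reason the plan stalls, is that there is no arithmetic surrogate for the Wronskian step: over $\mathbb{Z}$ one has no unconditional bound of the shape $\max_i|a_i|\ll \mathrm{rad}(a_1\cdots a_n)^{C(n)}$ for any explicit $C(n)$, let alone for the sharp value $C(n)=2n-5+\epsilon$; the available general results (from the Schmidt--Schlickewei subspace theorem, or from Baker-type bounds on linear forms in logarithms) give only qualitative finiteness or bounds far weaker than conjectured. Moreover the subsum reduction is not known to suffice: for $n\ge 4$ there is no known way to deduce the $n$-conjecture from ABC, so the general case is not accessible by elementary manipulation of the $n=3$ case. The one thing a proposal can honestly contribute is the observation that the exponent $2n-5$ is forced---Browkin and Brzezinski exhibit extremal families of solutions showing it cannot be lowered (for $n=3$ this recovers the familiar failure of exponent $1$ in ABC)---so the conjecture, if true, is optimal up to $\epsilon$. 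A complete proof must wait, at the very least, on a proof of the ABC conjecture itself; accordingly we assume Conjecture \ref{c:BrowBrz} in what follows rather than prove it.
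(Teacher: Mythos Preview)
Your assessment is correct and matches the paper's own treatment: Conjecture~\ref{c:BrowBrz} is not proved in the paper either. It is stated as a conjecture (the Browkin--Brzezinski $n$-conjecture) and is explicitly assumed as a hypothesis in Theorem~\ref{t:main}, whose statement begins ``If Conjecture~\ref{c:BrowBrz} is true, then\ldots''. So there is nothing to compare your proposal against; your discussion of why a proof is out of reach, and your conclusion that the conjecture should be assumed rather than proved, is exactly the stance the paper takes.
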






\begin{proof}[Proof of Theorem \ref{t:main}]

We may assume without loss of generality that $p_2 > p_1$, and that $a_1 + a_2 + a_3 + a_4 = 0$ is a sum of powers of $p_1$ and $p_2$ which vanish, and no sum with fewer terms is zero. Our proof deals with two cases: the first has at least one summand divisible by a large power of $p_2$, and the second assumes that all four summands are only divisible by a small number of multiples of $p_2$.

In applying the conjecture, we are concerned about the case when $n = 4$, and we fix $\epsilon = 1$ for simplicity. Note that each $a_i$ is a product of powers of $p_1$ and $p_2$ and so ${\rm rad}(a_1 \times \cdots \times a_n) = p_1 p_2$. We then get

\begin{equation}\label{eq:ourgeneralabc}
\max(|a_1|,|a_2|,|a_3|,|a_4|)\ \leq\ C_{4, 1} (p_1 p_2)^{4}.
\end{equation}

Choose $m \in \Z$ such that $m > \max\{8, \log_3(C_{4,1})\}$ and choose primes $p_1$ and $p_2$ such that the following conditions are satisfied:

\begin{enumerate}

\item $p_1 \geq 18^m$;

\item $p_2 \in \left(3p_1, \frac{1}{3}p_1^{1+\frac{1}{m}}\right)$.

\end{enumerate}

We first show that we can always find a prime satisfying Condition 2. From Condition 1 we have that $p_1 \geq 18^m$, which we can obviously satisfy for any choice of $m$. We then get

\bea\label{eq:prime1conditions}
p_1\ \geq\ 18^m\  \ \ \
%
%
\Rightarrow \ \ \ \  \frac{1}{3} p_1^{1 + \frac{1}{m}}\ \geq\ 2 \cdot 3p_1.
\eea


By Bertrand's postulate (see for example \cite{Da}) there is always a prime in $(x, 2x)$ for all $x > 1$. Thus we see that we can find a prime $p_2 \in (3p_1, 2\cdot 3p_1) \subseteq \left(3p_1, \frac{1}{3}p_1^{1+\frac{1}{m}}\right)$.

We first deal with the case where at least one summand is divisible by a large power of $p_2$; in particular, we define large to be greater than $m$. 

%




%

To show that any term divisible by $p_2^k$ where $k \geq m$ cannot be a part of a 4-term sum, we only need to show that $p_2^{m} > C_{4,1}(p_1 p_2)^{4}$.

We required that $m > \max\{8, \log_3(C_{4,1})\}$. This gives us that $C_{4,1} < 3^m$, 

because $m > \log_3(C_{4,1})$;

it also gives that $\frac{8m + 4}{m^2} < 1$, because $m$ is at least $9$. 

Combining these inequalities, we see that


\begin{equation} C_{4,1}^\frac{1}{m} p_1^{\frac{8m + 4}{m^2}}\ <\ 3p_1.\end{equation}

Finally, since $3p_1 < p_2$, we get that

\begin{equation} p_2\ >\ C_{4,1}^\frac{1}{m} p_1^{\frac{8m + 4}{m^2}}\end{equation}

and so

\bea
p_2^m & \ \ge \ & C_{4,1} p_1^{\left(8 + \frac{4}{m}\right)} \nonumber \\
&>& C_{4,1} p_1^4 p_1^{4\left({1 + \frac{1}{m}}\right)} \nonumber \\
&>& C_{4,1} (p_1 p_2)^4,
\eea


the last substitution coming from the fact that $p_2 < p_1^{1+\frac{1}{m}}.$

Thus if a 4-term sum exists, it cannot have any term divisible by $p_2^m$.

Next, we invoke Lemma \ref{lem:primeordering} to show that when all the terms are divisible only by powers of $p_2$ that are less than $m$, we have enough separation between possible products of powers of primes to make the sum impossible.

To see this, we need to show that letting $S =  \{ p_1^k p_2^\ell | k \in \Z_{\geq 0}, 0 \leq \ell < m\}$ that for all $a, b \in S$ if $a < b$ then $3a < b$.

Our lemma gives us an ordering on the elements -- now that we have this ordering, we only need to verify the two following cases.

\begin{enumerate}

\item $3p_1^kp_2^\ell < p_1^{k-1}p_2^{\ell+1}$

\item $3p_2^\ell < p_1^{\ell + 1}$.

\end{enumerate}








Case 1 corresponds to the case in which $a = p_1^k p_2^\ell$ and $b =a \frac{p_2}{p_1}$. By our original conditions, we have that $\frac{p_2}{p_1} > 3$ and therefore $3a < b$.

In Case 2 we have $a = p_2^\ell$ and $b = p_1^{\ell+1}$, and by our initial conditions, since $\ell \leq m$, we see that

\begin{equation} p_2^\ell\ <\ \frac{1}{3}p_1^{\ell + \frac{\ell}{m}} \end{equation}

so

\begin{equation} 3p_2^\ell\ <\ p_1^{\ell + 1}.\end{equation}

Therefore, for all $a, b \in S$, if $a < b$ then $3a < b$.


With all of these conditions in place, we then see that $\Z\left[1/p_1, 1/p_2\right]$ does not have a 4-cycle.

\end{proof}

\subsection{Numerics}

We end by examining patterns that occur when counting the number of cycles for a given prime list. Based on our observations and the formulation of our results, we conjecture that the number of cycles that occur when considering a specific list of primes correlates with the spacing between the primes. Intuitively, if the primes are spaced far apart, the likelihood of them ``interacting'' in a way that gives a cycle -- that is, finding some combination of four products of the primes that sums to zero -- is small. We examine this conjecture through computation and find the pattern to hold.

Figure \ref{fig:mingap} gives a plot of the number of cycles based on the minimum gap between primes in the inversion set. The points are based on lists of five of the first 50 primes. For example, the inversion set $\{37, 73, 83, 127, 157\}$ admits two cycles. The minimum gap associated to this list is 10. In the plots, the size of the point at any given position represents the number of lists associated with that gap and number of cycles.

\begin{center}
\begin{figure}[h]
\includegraphics[scale = 0.5]{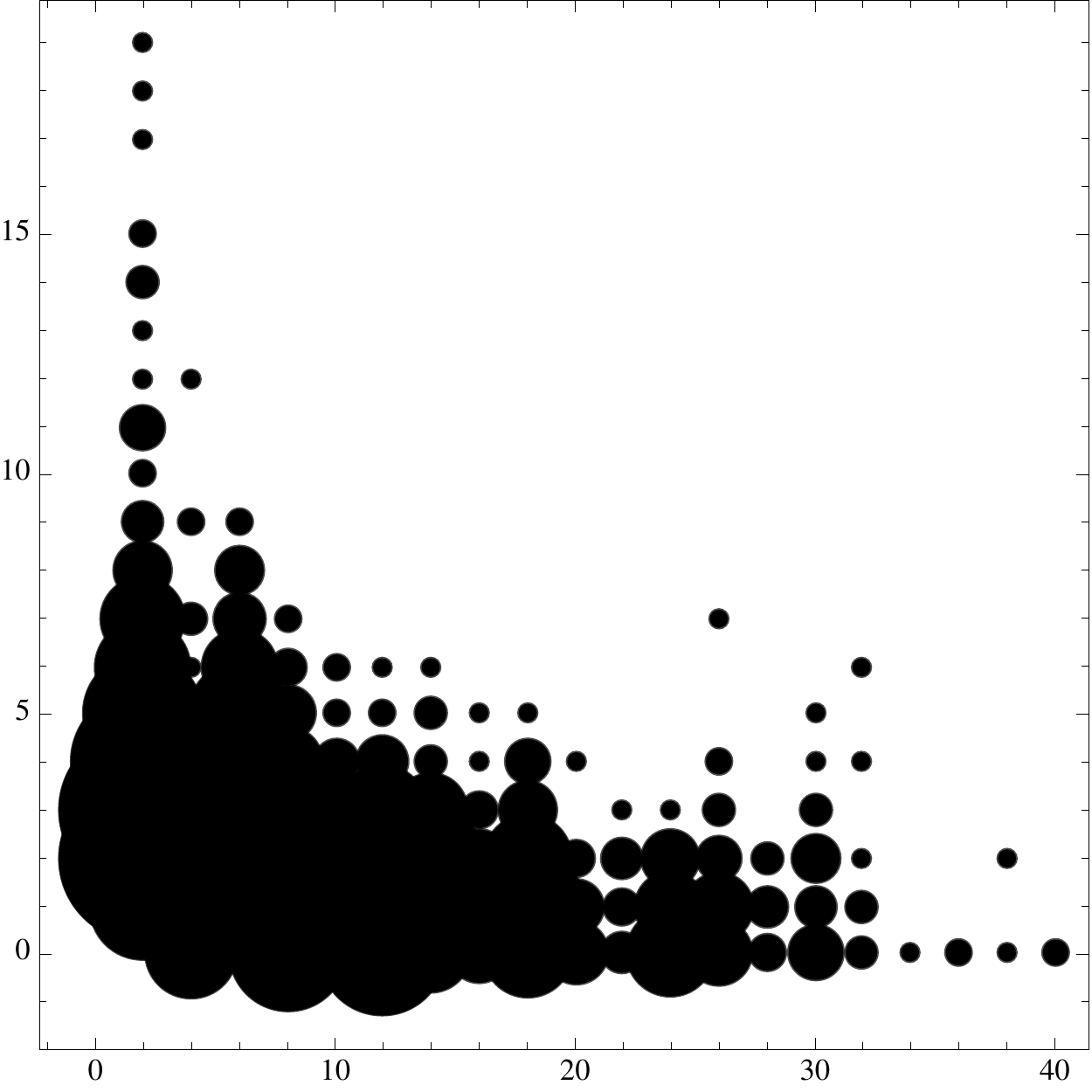}
\caption{\label{fig:mingap} Minimum gap between inversion set vs. number of cycles, with size of points reflecting density of the data.}
\end{figure}
\end{center}




\section{Future Work}\label{sec:futurework}

In terms of the main result, there are at least two directions in which to proceed. First, we would like to extend Theorem \ref{t:main} to sets of $n$ primes. The main difficulty with extending our method of proof is constructing a set of primes so that both methods in the proof still apply. In particular, a generalization of Lemma \ref{lem:primeordering} would be needed. Second, we would like to eliminate the dependence of Theorem \ref{t:main} on the generalized ABC conjecture given as Conjecture \ref{c:BrowBrz}.

In Section \ref{sec:doubletonsadmit4cycles} we considered particular shapes of doubleton inversion sets that admit 4-cycles. Related questions of the following flavor suggest themselves: Given an inversion set consisting of a particular odd prime $p$, what is the minimal number of primes we need to add to the set to ensure that it admits a 4-cycle? Per the results in that subsection, this answer might usually be one (if $p$ happens to be a twin prime, for example), but occasionally it might be two. Or, if we have an inversion set with two primes that is known to avoid 4-cycles, how ``easy'' is it to introduce 4-cycles by adding primes to it? (Maybe such sets happen to be difficult to disrupt in this way, and require essentially grafting an entire inversion set that is known to work, such as a pair of twin primes, or maybe not.) These and other questions would be interesting to consider further.






\appendix

\section{Cycle Lengths in $\Z[1/2]$}\label{app:p=2case}




A few results from \cite{Zieve} will be helpful. The \tbf{Lenstra constant} of a ring $R$ was defined in \cite{Lenstra} to be

\begin{align}
L(R) =& \sup \{k : \text{there exist $x_1, \ldots, x_k \in \R$ such that $x_i - x_j \in R^{*}$} \\
& \text{for all $i, j$ for which $1 \leq i < j \leq k$ } \nonumber \}
\end{align}


\begin{example} $L(\Z) = 2$. To show that $L(\Z) \geq 2$, just consider the set $\{0,1\}$. To see that the Lenstra constant cannot exceed 2, without loss of generality we can shift all our elements so the first is 0. As the units are $\pm 1$, without loss of generality $x_2 = 1$, and there is no choice for $x_3$ such that $x_3 - 0$ and $x_3 - 1$ are both units.

\end{example}

\begin{lem}[Lemma 22, \cite{Zieve}] \label{lem:Lem22Zieve}

If a polynomial over $R$ has a $p$-cycle in $R$, where $p$ is prime, then $p \leq L(R)$.

\end{lem}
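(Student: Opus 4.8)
The plan is to reduce a $p$-cycle to a configuration witnessing that the Lenstra constant $L(R)$ is at least $p$, using the same divisibility-chain idea that appeared in the discussion following the first Lemma of the introduction. Suppose $f \in R[x]$ has a $p$-cycle $(x_1, x_2, \ldots, x_p)$, with $f(x_i) = x_{i+1}$ (indices mod $p$). Since $x - y \mid f(x) - f(y)$ in $R[x,y]$, for each $i$ we get $x_{i+1} - x_i \mid f(x_{i+1}) - f(x_i) = x_{i+2} - x_{i+1}$. Running this around the whole cycle yields a cyclic chain of divisibilities among the consecutive differences $d_i := x_{i+1} - x_i$, so that each $d_i$ divides the next and, after going all the way around, $d_i \mid d_i$; hence all the $d_i$ are associates, and in particular each ratio $d_i/d_j$ is a unit of $R$. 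The first thing I would record is therefore: \emph{all consecutive differences $x_{i+1}-x_i$ in a $p$-cycle are associates.}

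Next I would leverage this to control the non-consecutive differences. For $i < j$, write $x_j - x_i = d_i + d_{i+1} + \cdots + d_{j-1}$, a sum of associates of a single element, say $d := d_1$. So $x_j - x_i = c_{ij} \, d$ where $c_{ij} \in R$ is a sum of units. The key observation is that $c_{ij}$ cannot be a zero divisor, and in fact cannot be a non-unit: this is exactly where primality of $p$ enters. If some $x_j - x_i$ were a non-unit multiple of $d$, one can subdivide the cycle and extract a shorter cycle-like relation, or directly: modulo any prime (i.e.\ maximal ideal) $\mathfrak{m}$ containing that non-unit, the $p$ differences $d_i/d$ are all units, their total sum $\sum_{i=1}^p (d_i/d)$ vanishes (it telescopes to $x_1 - x_1 = 0$), yet a proper partial sum also vanishes mod $\mathfrak m$; I would argue this forces $p$ to be composite by a pigeonhole/ordering argument on the partial sums, contradicting the hypothesis. (This is the crux; Zieve's original handles it via the structure of unit sums, and an alternative is to invoke that in an integral domain a $p$-cycle with $p$ prime cannot have a ``sub-cycle.'') Granting this, every difference $x_j - x_i$ ($i \neq j$) is a unit times $d$.

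Finally, I would translate back to the Lenstra constant. Set $y_i := x_i / d$ — wait, $d$ need not be invertible, so instead consider the $p$ elements $x_1, \ldots, x_p$ directly: we have shown $x_i - x_j \in R^\ast$ for all $i \neq j$. That is precisely a set of $p$ elements of $R$ with all pairwise differences units, so by the definition of the Lenstra constant, $L(R) \geq p$, which is the claim. I would remark that the main obstacle is the middle step — ruling out that a non-consecutive difference is a proper multiple of $d$ — and that this is exactly the place where the hypothesis ``$p$ prime'' is indispensable (for composite cycle lengths the analogue fails, consistent with $\Z_{(2)}$ admitting $4$-cycles but not $2 \cdot 2$-bounding its Lenstra constant). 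Everything else is the divisibility bookkeeping already used earlier in the paper.
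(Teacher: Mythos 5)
The overall shape is right---reduce to showing every non-consecutive difference is a unit multiple of $d$, then appeal to the definition of $L(R)$---but both of your key steps have genuine gaps. (The paper itself does not prove this lemma; it only cites Zieve's thesis, so I am evaluating your argument on its own.) First, the concluding translation is simply wrong as stated. You assert ``we have shown $x_i - x_j \in R^\ast$,'' but what you have shown (granting the crux) is that $x_i - x_j$ is an \emph{associate} of $d$, and $d$ itself need not be a unit. Concretely, take $R = \Z$, $f(x) = -x+5$, and the $2$-cycle $(0,5)$: here $d = 5$ and $x_1 - x_2 = -5$ is not a unit, yet the lemma's conclusion $L(\Z) \geq 2$ still holds. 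The correct move is the one you started and abandoned: set $y_i := (x_i - x_1)/d$. These lie in $R$ not because $d$ is invertible but because $x_i - x_1$ is a sum of associates of $d$, hence divisible by $d$; then $y_j - y_i = c_{ij}$, so the units-claim you want is exactly $y_j - y_i \in R^\ast$, and it is the $y_i$'s, not the $x_i$'s, that witness $L(R) \geq p$.

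Second, the ``crux'' is genuinely not proved, and the pigeonhole-on-partial-sums idea does not work as an abstract statement about unit sequences. In $\mathbb{F}_5$ the sequence $1,1,3,4,1$ of nonzero elements sums to zero and has the vanishing proper partial sum $1+1+3$, yet $5$ is prime; so ``a proper partial sum vanishes mod $\mathfrak{m}$'' does not by itself force $p$ composite. The missing ingredient is that the $y_i$ themselves form a $p$-cycle for a polynomial over $R$: one checks that $g(t) := \bigl(f(x_1 + dt) - x_1\bigr)/d$ lies in $R[t]$ (since $x - y \mid f(x)-f(y)$ gives $d \mid f(x_1+dt)-f(x_1)$ as polynomials in $t$, and the constant term $(f(x_1)-x_1)/d = u_1 \in R$) and satisfies $g(y_i) = y_{i+1}$. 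Now if $y_j - y_i$ lay in a maximal ideal $\mathfrak{m}$, then in $R/\mathfrak{m}$ the orbit of $\bar y_1$ under $\bar g$ has period dividing both $p$ and $j-i$; since $p$ is prime and $0 < j - i < p$, that period is $1$, so $\bar y_2 = \bar y_1$, i.e., the unit $u_1 \in \mathfrak{m}$, a contradiction. This is where primality is actually used, and it cannot be reached by bookkeeping the partial sums without the polynomial map.
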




%



%

%




%




%

%

%

First we prove a helpful lemma.




%

\begin{lemma}\label{lem:Z2nobigpcycles}

$\Z \left[ 1/2 \right]$ admits no cycles of prime length $p > 3$.

\end{lemma}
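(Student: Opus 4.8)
The plan is to combine the Lenstra-constant bound (Lemma \ref{lem:Lem22Zieve}) with an explicit computation of $L(\Z[1/2])$. The claim to establish is that a polynomial over $\Z[1/2]$ cannot have a $p$-cycle for prime $p > 3$; by Lemma \ref{lem:Lem22Zieve} it suffices to show $L(\Z[1/2]) \le 3$, since then any prime cycle length $p$ satisfies $p \le L(\Z[1/2]) \le 3$, forcing $p \in \{2,3\}$ and excluding all primes exceeding $3$.

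First I would recall that the units of $R = \Z[1/2]$ are exactly $\pm 2^k$ for $k \in \Z$. To bound the Lenstra constant, suppose $x_1, \dots, x_k \in R$ are such that $x_i - x_j \in R^*$ for all $i \ne j$; after translating we may take $x_1 = 0$, and after scaling by a power of $2$ (which preserves the property of pairwise differences being units) we may normalize so that not all $x_i$ are even, i.e.\ at least one $x_i$ is a $2$-adic unit. Then each nonzero $x_i = x_i - x_1$ is of the form $\pm 2^{a_i}$; if two of them, say $x_i$ and $x_j$, were both divisible by $2$, then $x_i - x_j$ would also be divisible by $2$ and equal to $\pm 2^{b}$ with $b \ge 1$, which is fine, but the key constraint comes from looking modulo $2$: among $x_1, \dots, x_k$, reducing modulo the maximal ideal $(2)$ of the local ring $\Z_{(2)} \supset R$, any two elements whose difference is a unit must be distinct mod $2$. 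But $\Z_{(2)}/(2) \cong \Z/2\Z$ has only two elements, which would naively give $L \le 2$; the subtlety is that $R^*$ is larger than $\Z_{(2)}^*$, so I instead argue directly. With $x_1 = 0$ and the $x_i$ for $i \ge 2$ each of the form $\pm 2^{a_i}$, I would show that at most two nonzero such values can have pairwise unit differences: if $\pm 2^a$ and $\pm 2^b$ with $a < b$ have difference a unit $\pm 2^c$, then $2^a(\pm 1 \mp 2^{b-a}) = \pm 2^c$ forces $c = a$ and $\pm 1 \mp 2^{b-a} = \pm 1$, so $b = a$, a contradiction unless the signs differ, giving $\pm 1 \mp 2^{b-a} \cdot(-1)$-type relations; carefully enumerating the sign cases shows the only way to get three elements $0, u_1, u_2$ with all pairwise differences units is something like $\{0, 1, -1\}$ (differences $1, -1, 2$, all units) or $\{0, 1, 2\}$ (differences $1, 2, 1$, all units) — and in each case a fourth element is impossible. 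Hence $L(\Z[1/2]) = 3$.

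The main obstacle is the careful sign bookkeeping in showing no four-element set has pairwise unit differences — one must rule out all configurations $\{0, \pm 2^{a_2}, \pm 2^{a_3}, \pm 2^{a_4}\}$, which amounts to checking that the equation $\pm 2^a \pm 2^b = \pm 2^c$ (a three-term unit sum) has only the solutions with two of $a,b,c$ equal, and then leveraging that to bound the set size. This is essentially the observation that $1 + 1 = 2$, $2 - 1 = 1$, etc., are the only such relations, closely parallel to the $\bmod\, p$ argument in the proof of Theorem \ref{prop:singleton}. Once $L(\Z[1/2]) \le 3$ is in hand, the conclusion is immediate from Lemma \ref{lem:Lem22Zieve}: no prime $p > 3$ can be a cycle length over $\Z[1/2]$.
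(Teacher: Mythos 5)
Your strategy is the paper's: bound the Lenstra constant $L(\Z[1/2]) \leq 3$ and apply Lemma~\ref{lem:Lem22Zieve}. The paper, however, avoids the sign bookkeeping you flag as the main unresolved obstacle by ordering the four hypothetical pairwise-unit-difference elements decreasingly, so that the consecutive differences $x_1 - x_2 = 2^{k_1}$, $x_2 - x_3 = 2^{k_2}$, $x_3 - x_4 = 2^{k_3}$ are positive powers of two; then $x_1 - x_3 = 2^{k_1}(1 + 2^{k_2-k_1})$ and $x_2 - x_4 = 2^{k_2}(1 + 2^{k_3-k_2})$ being units force $k_1 = k_2 = k_3$, whence $x_1 - x_4 = 3\cdot 2^{k_1}$ is not a unit. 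Your claimed fact that $\pm 2^a \pm 2^b = \pm 2^c$ only has solutions with two of the exponents coinciding is correct and would also carry the argument, but you still need to push that through the resulting sign configurations to conclude $L \leq 3$; the ordering trick discharges that casework automatically.
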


\begin{proof}

We first compute the Lenstra constant $L\left(\Z\left[ 1/2 \right]\right)$. Note that $f(x) = -(3/2) x^{2} + (11/2)x - 2$ has the 3-cycle $(1,2,3)$, so by Lemma \ref{lem:Lem22Zieve}, $L\left(\Z \left[ 1/2 \right]\right) \geq 3$.

Then, assume to the contrary that $L\left(\Z \left[ 1/2 \right]\right) \geq 4$; that is, assume there exist $x_1, \dots , x_4 \in \Z \left[ 1/2 \right]$ such that $x_i - x_j \in \Z \left[ 1/2 \right]^*$ for all $i,j$ for which $1 \leq i <j \leq 4$. Then, for some $k_1, k_2, k_3 \in \Z$ we have the following:

\be x_1 - x_2\ =\ 2^{k_1} \ee

\be x_2 - x_3 \ =\ 2^{k_2} \ee

\be x_3 - x_4 \ =\ 2^{k_3} \ee

\begin{equation} \label{eqn:ktrick12}
x_1 - x_3 = 2^{k_1}+2^{k_2} \ =\ 2^{k_1}(2^{k_2-k_1}+1)
\end{equation}

\begin{equation} \label{eqn:ktrick23}
x_2 - x_4 = 2^{k_2}+2^{k_3} \ =\ 2^{k_2}(2^{k_3-k_2}+1). \end{equation}

Equation \eqref{eqn:ktrick12} implies that $k_2 - k_1 = 0$, for otherwise, $x_1 - x_3$ would not be a unit. Similarly, Equation \eqref{eqn:ktrick23} implies that $k_3 - k_2 = 0$, so that $k_1 = k_2 = k_3$. Then we have that

\begin{equation}
x_1 - x_4 = 2^{k_1}+2^{k_2}+2^{k_3} = 3 \cdot 2^{k_1},
\end{equation}

so that $x_1 - x_4$ is not a unit, which is the desired contradiction. Thus, $L\left(\Z\left[ 1/2 \right]\right) < 4$, so that $L\left(\Z\left[ 1/2 \right]\right) = 3$.

Finally, by Corollary 24 in \cite{Zieve}, the only cycles of prime length that are admitted are of length 2 or 3. The result follows.

\end{proof}


We can obtain a slightly stronger result by considering the 3-smooth numbers.

\begin{defn} Let $B$ be a fixed integer. An integer $n$ is said to be \emph{\textbf{$B$-smooth}} if none of its prime factors are larger than $B$. That is, if $p$ is prime and $p \ | \ n$, then $p \leq B$.

\end{defn}




\begin{corollary}

If $\Z \left[ 1/2 \right]$ admits a cycle of length $k$, then $k$ is 3-smooth.

\end{corollary}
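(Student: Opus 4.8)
The plan is to combine the previously established structural results about $\Z[1/2]$ with a classical fact about cycle lengths in integral domains, namely that every cycle can be decomposed into cycles of prime length (this is implicit in the reasoning behind Corollary 24 of \cite{Zieve}, which reduces questions about cycle lengths to prime cycle lengths). First I would recall that if a polynomial over $R$ has a $k$-cycle, then for every prime $q$ dividing $k$ there is a polynomial over $R$ (some iterate of the original) having a $q$-cycle: if $(x_1, \dots, x_k)$ is a $k$-cycle for $f$ and $q \mid k$, then $x_1, f^{k/q}(x_1), f^{2k/q}(x_1), \dots$ is a $q$-cycle for $g = f^{k/q}$, and these points are distinct because they are distinct entries of the original cycle.

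Given that observation, the proof is short. Suppose $\Z[1/2]$ admits a $k$-cycle but $k$ is not $3$-smooth, so some prime $q > 3$ divides $k$. By the decomposition above, $\Z[1/2]$ then admits a $q$-cycle with $q$ prime and $q > 3$. This directly contradicts Lemma \ref{lem:Z2nobigpcycles}, which asserts that $\Z[1/2]$ admits no cycle of prime length exceeding $3$. Hence every prime factor of $k$ is at most $3$, i.e., $k$ is $3$-smooth.

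The only real content beyond Lemma \ref{lem:Z2nobigpcycles} is the passage from a $k$-cycle to a $q$-cycle for each prime $q \mid k$; I expect this to be the main (though still routine) point to get right, since one must check that the iterated points are genuinely distinct and that they do return after $q$ steps of $g = f^{k/q}$. Both follow immediately from the structure of the original cycle: indexing the cycle by $\Z/k\Z$ via $x_{i+1} = f(x_i)$, applying $f^{k/q}$ repeatedly from $x_1$ visits exactly the residues $1, 1 + k/q, 1 + 2k/q, \dots$ modulo $k$, which form a subgroup of order $q$, so the orbit has exactly $q$ distinct elements and closes up. No appeal to the generalized ABC conjecture or to any of the separation lemmas is needed; this corollary is purely a consequence of the $p=2$ analysis already carried out.
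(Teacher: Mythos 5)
Your argument is correct and is essentially identical to the paper's proof: both take a prime $q>3$ dividing $k$, pass to the iterate $f^{k/q}$, observe that it has a $q$-cycle formed by every $(k/q)$-th element of the original cycle, and contradict Lemma~\ref{lem:Z2nobigpcycles}. The only cosmetic difference is the choice of starting index for the sub-cycle, which does not affect the substance.
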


\begin{proof}

Assume to the contrary that $f(x) \in \Z\left[ 1/2 \right][x]$ has a cycle $(a_1,a_2,\ldots,a_k)$ of length $k$, with $k$ not 3-smooth. Then there exists a prime $p > 3$ such that $p \ | \ k$. But then $f^\frac{k}{p}(x) \in \Z\left[ 1/2 \right][x]$ has a cycle of length $p$, namely $(a_{k/p},a_{2k/p},\ldots,a_k)$, which contradicts Lemma \ref{lem:Z2nobigpcycles}.

\end{proof}




\section{Proofs and Examples of Theorem \ref{thm:doubletonclass} } \label{app:doubletonclass}

\begin{proof}[Proof of Theorem \ref{thm:doubletonclass} (2)] Using our reformulation, we write

\begin{align}
u_1 \ & = \ p^n-2 \nonumber \\
u_2 \ & = \ 1 \nonumber \\
u_3 \ & = \ 1 \nonumber \\
u_4 \ & = \ -p^n.
\end{align}

It is clear that $u_1 \ + \ u_2 \ + \ u_3 \ + \ u_4 = 0$ and there are no zero proper subsums of $u_i$'s, so that $\{p,p^n-2\}$ admits a 4-cycle.

\end{proof}

We give an example of this case when $n=2$ and $p=5$.

\begin{example}[Example of (2)] Consider the polynomial

\begin{equation}
g(x) \  \ = \  \ -\frac{2}{575}x^3 \ + \ \frac{112}{115}x^2 \ + \ \frac{3127}{575}x \ - \ \frac{16019}{115} \ \in \Z\left[\frac 15, \frac{1}{23}\right][x].
\end{equation}

It is easy to verify that $g$ has the 4-cycle $(-14,-15,10,9)$. This example also shows that the step sizes $u_i$ can appear with all polarities reversed, too.

\end{example}

\begin{proof}[Proof of Theorem \ref{thm:doubletonclass} (3)] Using our reformulation, we write

\begin{align}
u_1 \ & = \ 2p+1 \nonumber \\
u_2 \ & = \ -p \nonumber \\
u_3 \ & = \ -p \nonumber \\
u_4 \ & = \ -1.
\end{align}

It is clear that $u_1 \ + \ u_2 \ + \ u_3 \ + \ u_4 = 0$ and the set of $u_i$'s has no zero proper subsum, so that $\{p,2p+1\}$ admits a 4-cycle.

\end{proof}

\begin{example}[Example of (3)] Consider the polynomial

\begin{equation}
h(x) \  \ = \  \ -\frac{2}{11}x^3 \ - \ \frac{146}{55}x^2 \ - \ \frac{39}{5}x \ + \ 7/11 \ \in \Z\left[\frac 15, \frac {1}{11}\right][x].
\end{equation}

It is easy to verify that $h$ has the 4-cycle $(-10,-5,-4,1)$.

\end{example}








\end{document}